\newtheorem{thm}{Theorem}[section]
\newtheorem{prop}[thm]{Proposition}
\newtheorem{lem}[thm]{Lemma}
\newtheorem{cor}[thm]{Corollary}
\theoremstyle{definition}
\newtheorem{rem}[thm]{Remark}
\newtheorem{dfn}[thm]{Definition}
\newtheorem{exmpl}[thm]{Example}
\newtheorem{exmpls}[thm]{Examples}
\newcommand{\SOg}{\textrm{\textbf{SO}}}
\newcommand{\NN}{\mathbb N}
\newcommand{\ZZ}{\mathbb Z}
\newcommand{\RR}{\mathbb R}
\newcommand{\id}{\textrm{id}}
\newcommand{\Sq}{\textrm{Sq}}
\newcommand{\biwe}{\mathord{\adjustbox{valign=B,totalheight=.6\baselineskip}{$\bigwedge$}\kern-0.1em}}
\def\haken{\mathbin{\hbox to 5pt{%
                 \vrule height0.4pt width5pt depth0pt
                 \kern-.4pt
                 \vrule height6pt width0.4pt depth0pt\hss}}}
\title{A counting invariant for maps into spheres and for zero loci of sections
of vector bundles}
\author{Panagiotis Konstantis}
\begin{document}
\maketitle
\begin{abstract}
The set of unrestricted homotopy classes $[M,S^n]$ where $M$ is a closed and connected spin
  $(n+1)$-manifold is called the $n$-th cohomotopy group $\pi^n(M)$ of $M$. Moreover it is known
  that $\pi^n(M) = H^n(M;\ZZ) \oplus \ZZ_2$ by methods from homotopy theory. We will provide
  a geometrical description of the $\ZZ_2$ part in $\pi^n(M)$ analogous to Pontryagin's
  computation of the stable homotopy group $\pi_{n+1}(S^n)$. This $\ZZ_2$ number can
  be computed by counting embedded circles in $M$ with a certain framing of their normal
  bundle. This is a similar result to the mod $2$ degree theorem for maps $M \to S^{n+1}$.

  Finally we will observe that the zero locus of a section in an oriented rank $n$ vector bundle
  $E \to M$ defines an element in $\pi^n(M)$ and it turns out that the $\ZZ_2$ part is an invariant of
  the isomorphism class of $E$. At the end we show, that if the Euler class of $E$ vanishes this
  $\ZZ_2$ invariant is the final obstruction to the existence of a nowhere vanishing section.
\end{abstract}
\section{Introduction}\label{S:Introduction}
\emph{Pontryagin} computed in \cite{MR0115178} the (stable) homotopy group $\pi_{n+1}(S^n)$ ($n\geq 3$) by
using differential topology. Let us describe briefly his construction, since this paper will
generalize his idea.

Pontryagin showed that $\pi_{n+1}(S^n)$ is isomorphic to the bordism group of closed
$1$-dimensional submanifolds of $\RR^{n+1}$ furnished with a framing on its normal bundle (a \emph{framing}
is a homotopy class of trivializations, see section \ref{S:Preliminaries}). We denote
this bordism group by $\Omega_1^{\textrm{fr}}(\RR^{n+1})$. Let $(C,\varphi)$ be a representative
of an element of $\Omega_1^{\textrm{fr}}(\RR^{n+1})$, i.e. $C$ is a union of embedded circles in
$\RR^{n+1}$ and there are maps $\varphi_1,\ldots,\varphi_n \colon C \to \RR^{n+1}$ such that
$(\varphi_1(x),\ldots,\varphi_n(x))$ is a basis of $\nu(C)_x$ for every $x \in C$. Let
$\varphi_{n+1}$ be a trivialization of the tangent bundle of $C$. Then
$(\varphi_1(x),\ldots,\varphi_{n+1}(x))$ is a basis of $\RR^{n+1}$ for every $x \in C$. Without
loss of generality we may assume that $\varphi_1,\ldots,\varphi_{n+1}$ is pointwise an orthonormal
basis. If $(e_1,\ldots,e_{n+1})$ denotes the standard basis of $\RR^{n+1}$ then consider the map
$A=(a_{ij}) \colon C \to \mathrm{SO}(n+1)$ such that
\[
  \varphi_i(x) = \sum_{j=1}^{n+1} a_{ij}(x)e_j
\]
for $x \in C$. Let $\pi_1(\mathrm{SO}(n+1))$ be identified with $\ZZ_2$, then Pontryagin defines
\cite[Theorem 20]{MR0115178}
\[
  \delta(C,\varphi) := [A] + (n(C) \mod 2)
\]
where $[A]$ denotes the homotopy class of $A$ in $\pi_1(\mathrm{SO}(n+1))$ and $n(C)$ is the number
of connected components of $S$. He showed that $\delta$ is well-defined on
$\Omega_1^{\textrm{fr}}(\RR^{n+1})$ and is an isomorphism of groups.

From a different point of view, one may consider his computation not as a computation of a homotopy
group of $S^n$ but rather of a \emph{cohomotopy group} of $S^{n+1}$. If $X$ is a CW space then the
cohomotopy set of $X$ is defined as the set of (unrestricted) homotopy classes
$\pi^n(X):=[X,S^n]$, cf. \cite{MR0029170, MR1549930}. The set $\pi^n(X)$ for $X$ a finite CW complex of
dimension $n+1$ carries naturally a group structure, which is described in the beginning of section
\ref{S:computation of pinM}.
\emph{Steenrod} showed \cite[Theorem 28.1, p. 318]{MR0022071} that $\pi^n(X)$ fits into a short exact
sequence
\[
  0 \longrightarrow H^{n+1}(X;\ZZ_2)/ \Sq^2 \mu(H^{n-1}(X;\ZZ)) \longrightarrow
  \pi^n(X) \longrightarrow H^n(X;\ZZ) \longrightarrow 0,
\]
where $\mu \colon H^{\ast}(X;\ZZ) \to H^\ast(X;\ZZ_2)$ is the mod $2$
reduction homomorphism. Here the surjective map is the Hurewicz homomorphism which assigns to every $f \in \pi^n(X)$ the
cohomology class $f^\ast(\sigma) \in H^n(X;\ZZ)$ where $\sigma \in H^n(S^n,\ZZ)$ is a fixed generator.

Moreover using methods of \emph{Larmore} and \emph{Thomas} \cite{MR0328935} \emph{Taylor} showed in
\cite[Theorem 6.2, Example 6.3]{MR3084240} that the short exact sequence splits, provided the images of $\Sq^2 \colon H^{n-1}(X;\ZZ_2)
\to H^{n+1}(X;\ZZ_2)$ and $\Sq^2 \circ \mu \colon H^{n-1}(X;\ZZ) \to H^{n+1}(X;\ZZ_2)$ coincide.

If $X=M$ is a manifold then the \emph{second Wu class} \cite{MR35992} is equal to the
second Stiefel-Whitney class $w_2(M)$, hence $\Sq^2(x) = w_2(M)\smile x$ for $x \in
H^{n-1}(M;\ZZ_2)$. Therefore if $M$ is spin then $\pi^n(M)$ fits into the exact sequence
\[\label{eq:SES}
  0 \longrightarrow\ZZ_2 \longrightarrow
  \pi^n(M) \longrightarrow H^n(M;\ZZ) \longrightarrow 0. \tag{ST}
\]
and \eqref{eq:SES} splits by \cite[Example 6.3]{MR3084240} thus
\[
  \pi^n(M) \cong H^n(M;\ZZ) \oplus \ZZ_2
\]
as abelian groups. However the splitting map is constructed in a purely homotopy theoretic setting
and an aim of this article is to provide a geometric description in case $M$ is a spin manifold.

This splitting map $\kappa \colon \pi^n(M) \to \ZZ_2$ (see Definition \ref{D:invariant}) for
\eqref{eq:SES} will be constructed similarly to Pontryagin's invariant $\delta$ from above. An
important ingredient in Pontryagin's construction was the canonical \emph{background framing} by
the standard basis of $\RR^{n+1}$, which allowed him to define the map $A\colon S \to
\mathrm{SO}(n+1)$. In general if we replace $S^{n+1}$ or $\RR^{n+1}$ by $M$, this background framing
is not available any more. But this can be circumvented by using the spin structure of $M$, since
over a circle every vector bundle with a spin structure defines a certain framing, cf. Lemma \ref{L:Framing
on C}. Section \ref{S:computation of pinM} is devoted to determine geometrically the kernel
of the Hurewicz map $\pi^n(M) \to H^n(M;\ZZ)$. Finally we show that the splitting map possesses
a naturality property, cf. Proposition \ref{P:naturality} and that for a map $f\colon M\to S^n$ the number
$\kappa(f)$ can be described by a counting formula, cf. Corollary \ref{C:counting formula}. This is an
analogous result to the mod $2$ Hopf theorem, see \cite[§4]{MR0226651}. It should be mentioned
that in \cite{MR3084237} the authors discuss the case $n=3$ and in \cite{1812.06547v2} a similar
construction of a $\ZZ_2$ invariant was used to classify quaternionic line bundles over closed spin $5$-manifolds.

In Section \ref{S:Application to vector bundles} we will apply the results of Sections \ref{S:Index
of framed circles} and \ref{S:computation of pinM} to the theory of vector bundles. Suppose $E \to
M$ is a oriented vector bundle of rank $n$ over a closed spin $(n+1)$-manifold $M$. Then any
section of $E$ which is transverse to the zero section defines by means of its zero locus an
element of $\Omega_1^{\textrm{fr}}(M)$ and this element is independent of the transverse section.
Thus using $\kappa$ one defines an invariant $\kappa(E) \in \Omega_1^\textrm{fr}$ of the
isomorphism class of the bundle
$E\to M$. In Theorem \ref{T:now where vanishing section} it is shown, that $\kappa(E)$ can be
regarded as the secondary obstruction to the existence of a nowhere vanishing section. As an
application we provide in Example \ref{E:Spheres} a simple proof of the well-known fact, that the
maximal number of linear independent vector fields on $S^{4k+1}$ is equal to $1$. Finally we show
that $\pi^n(M)$ can be mapped injectively into the set of isomorphism classes of oriented rank $n$
vector bundles over spin $(n+1)$-manifolds for $n=4$ and $n=8$, cf. Proposition \ref{P: vector
bundles in special cases}.

\section*{Acknowledgements}\label{S:Acknowledgements}
The author would like to thank S. Carmeli and M. Miller for helpful discussions in \cite{307928}.
\section{Preliminaries}\label{S:Preliminaries}
If not otherwise stated we denote by $M$ an $(n+1)$-dimensional oriented, closed and connected
manifold, where $n\geq 3$. Let $N$ be a arbitrary manifold and $E \to N$ a trivial vector bundle
over $N$ of rank $r$. A \emph{trivialization} of $E\to N$ are $r$ sections $s_1,\ldots,s_r \colon
N \to E$, such that $(s_1(q),\ldots,s_r(q))$ is a basis of the fiber $E_q$ for all $q \in N$. A
\emph{framing} $\varphi$ of $E\to N$ is a homotopy class of trivializations.

We recall now the notion of bordism classes of normally framed submanifold in $M$ of
dimension $k$ (cf. \cite[§7]{MR0226651}). Let $C$ be
a $k$-dimensional closed submanifold of $M$. We say that $C$ is \emph{normally framed} if the
normal bundle of $C$ is trivial and possesses a framing $\varphi$. Two such normally framed
submanifolds $(C_0,\varphi_0)$ and $(C_1,\varphi_1)$ are \emph{framed bordant} if there is
a $(k+1)$-dimensional submanifold $\Sigma \subset M \times [0,1]$ such that
\begin{enumerate}[label=(\alph*)]
  \item $\partial \Sigma \cap (M \times i) = C_i$ for $i=0,1$,
  \item $\partial \Sigma = C_0 \cup C_1$,
  \item $\Sigma$ is normally framed in $M \times [0,1]$ such that the framing restricted to the
    $\partial \Sigma \cap (M \times i)$ coincides with $\varphi_i$.
\end{enumerate}
To be framed bordant is an equivalence relation and the set of equivalence classes is called the
\emph{bordism classes of normally framed $k$-dimensional submanifolds} denoted by $\Omega_k^{\textrm{fr}}(M)$.
If $(C,\varphi)$ is a normally framed submanifold then we denote by $[C,\varphi]$ its bordism
class in $\Omega_k^{\textrm{fr}}(M)$.

The \emph{Pontryagin-Thom map} provides a bijection
between $\pi^{n+1-k}(M)$ and $\Omega_k^{\textrm{fr}}(M)$ as follows (cf. \cite[§7]{MR0226651}): Let
$f \colon M \to S^{n+1-k}$ represents an element of $\pi^{n+1-k}(M)$. Choose a regular value $x_0
\in S^{n+1-k}$ and set $C_{x_0}:= f^{-1}(x_0)$. Moreover choosing a basis of the tangent space
$T_{x_0}S^{n+1-k}$ endows the normal bundle with a framing $\varphi_{x_0}$ by means of the
derivative of $f$. The bordism class $[C_{x_0},\varphi_{x_0}] \in \Omega_{k}^{\textrm{fr}}(M)$ is
well defined and the map
\[
  \pi^{n+1-k}(M)\longrightarrow \Omega_k^{\textrm{fr}}(M), \quad [f]\mapsto
  [C_{x_0},\varphi_{x_0}].
\]
is a bijection, see \cite[Theorem B and A]{MR0226651}.

A \emph{stable framing} of a real vector bundle $E \to C$ of rank $r$ is an equivalence class of trivializations of
\[
  E \oplus \varepsilon^l
\]
for some $l \in \NN$ where two trivializations
\[
  \tau_1 \colon E \oplus\varepsilon^{l_1} \to \varepsilon^{r+l_1}
  \quad \textrm{and}\quad
  \tau_2 \colon E \oplus\varepsilon^{l_2} \to \varepsilon^{r+l_2},
\]
are considered to be equivalent if there exists some $L>l_1,l_2$ such that the isomorphisms
\[
  \tau_1 \oplus \textrm{id}\colon E \oplus\varepsilon^{l_1} \oplus \varepsilon^{L-l_1} \to \varepsilon^{L+r}
\]
and
\[
  \tau_2 \oplus \textrm{id}\colon E \oplus\varepsilon^{l_2} \oplus \varepsilon^{L-l_2} \to \varepsilon^{L+r}
\]
are homotopic, cf. \cite[Section 8.3]{MR1841974}. If $E$ is the tangent bundle of $C$, then
a stable framing of $TC$ is called a \emph{stable tangential framing}. If $E$ is the normal bundle
of an embedding of $C$ into a sphere of big dimension, then we call a stable framing a \emph{stable normal framing}.

  We define $\Omega_k^{\textrm{fr}}$ to be the bordism classes of stably (tangential) framed
  manifolds. More precisely two stably framed manifolds $(C_0,\varphi_0)$ and $(C_1,\varphi_1)$
  where $\varphi_i \colon TC_i \oplus \varepsilon^l \to \varepsilon^{k+l}$ is an isomorphism are
  equivalent if there is a bordism $\Sigma$ between $C_0$ and $C_1$ such that the tangent bundle of
  $\Sigma$ possesses a stable framing and the restriction on $C_0$ and $C_1$ coincides with the
  framing $\varphi_0$ and $\varphi_1$ respectively. Note that $\Omega_k^{\textrm{fr}}$ is
  isomorphic to $\pi_k^S$, the $k$-stable homotopy group of spheres (cf. \cite[Theorem
  8.17]{MR1841974}) and by the Pontryagin-Thom construction we have $\Omega_k^{\textrm{fr}}
  = \varinjlim_l \Omega_k^{\textrm{fr}}(S^l)$ where we use the equatorial embeddings $S^{l_1}
  \hookrightarrow S^{l_2}$ if $l_1 < l_2$ to construct well-defined maps
  $\Omega_k^{\textrm{fr}}(S^{l_1}) \to \Omega_k^{\textrm{fr}}(S^{l_2})$.

For this article the case $k=1$ will be of importance. In this case we have $\Omega_1^{\textrm{fr}}\cong
\pi_1^S \cong \ZZ_2$. Consider a connected and closed $1$-dimensional
manifold $S_0$ and stable tangential framing
\[
  \varphi_0 \colon TS_0 \oplus \varepsilon^n \stackrel{\sim}{\longrightarrow} \varepsilon^{n+1}.
\]
From the discussion above, $(S_0,\varphi_0)$ defines a class in $\Omega_1^{\textrm{fr}}$ and can be
realized as follows: Consider $S_0 =\{(x_1,\ldots,x_{n+1}) \in \RR^{n+1} : x_1^2+x_2^2=1,\, x_i=0,\,
i=3,\ldots,n+1\}$. Denote $e_1,\ldots,e_{n+1}$ the canonical basis of $\RR^n$ and $E_i(x)=e_i$ for
$x\in \RR^{n+1}$ the constant vector fields on $\RR^{n+1}$. Moreover let $V(x)=x$ for $x \in
\RR^n$.
 The normal bundle $\nu(S_0)$ of
$S_0$ is trivialized by $V,E_3,\ldots,E_{n+1}$ restricted to $S_0$. Using this normal framing we obtain
a stable framing
\[
  TS_0 \oplus\varepsilon^{n} \cong TS_0 \oplus \nu(S_0) \cong (T\RR^{n+1})|_{S_0} \cong \varepsilon^{n+1}
\]
where the latter framing is induced by $E_1,\ldots,E_{n+1}$. Hence this defines an element in
$\Omega_1^{\textrm{fr}}(S^{n+1})$ which represents the framed null bordism, since the framing of $\nu(S_0)$
can be extended to a properly embedded stably framed disc in $S^{n+1} \times [0,1]$. Clearly the non-trivial element
of $\Omega_1^{\textrm{fr}}(S^{n+1})$ can be represented by twisting the normal framing $E_3,\ldots,E_{n+1}$ with
a map $S_0 \to \textrm{SO}(n)$ such that its homotopy class in $\pi_1\left( \textrm{SO}(n)
\right)\cong \ZZ_2$ is not zero. Every stable tangential framing of a closed and connected
$1$-dimensional manifold can be obtain in this way.

If $E \to N$ is an oriented vector bundle over a manifold $N$, then we say that $E$ is \emph{spinnable}
if the second Stiefel-Whitney class $w_2(E)$ is zero. This means that $E$ can carry a spin
structure, that is a lift of the classifying map $N \to B \textrm{SO}(n)$ to a map $N \to B \textrm{Spin}(n)$
in the fibration $K(\ZZ_2,1) \to B \textrm{Spin}(n) \to B \textrm{SO}(n)$. Consequently
$E$ is a \emph{spin bundle} is it spinnable and a spin structure is fixed.
If a spin structure is fixed on $E \to N$ then any other spin structure is in $1 : 1$ correspondence
with elements in $H^1(N;\ZZ_2)$.

We write $F(N)$ for the frame bundle of a manifold $N$. If $V\subset N$ is a submanifold such that
its normal bundle is framed then we obtain an embedding $F(V) \subset F(N)$. Thus a spin structure
on $N$ induces a spin structure on $V$, cf. \cite{MR0157388}. In particular if $V$ is the boundary
of a spin manifold $N$, then $V$ inherits a spin structure
from $N$. Finally if $E \to N$ is a vector bundle with a spin structure and $V\subset N$
a submanifold, then clearly $E|_V \to V$ also inherits a spin structure from $E \to N$.

Let $E \to S^1$ be a spinnable vector bundle of rank $r \geq 3$ over the unit circle $S^1$. Then
$E$ has exactly two non-isomorphic spin structures. Clearly $E\to S^1$ can be extended to $E \to D^2$, where $D^2$ denotes the closed
unit disc in $\RR^2$. Since $D^2$ is contractible $E\to D^2$ admits a unique spin structure.
Restricting this structure to the boundary of $D^2$ gives a spin structure on $E\to S^1$, which
will be called the \emph{standard spin structure}. The other should be called the
\emph{non-standard spin structure}. In other words, the standard spin structure on $E \to S^1$ can
be extended to $D^2$, the non-standard not.
\section{The index of framed circles}\label{S:Index of framed circles}
We define in this section the key invariant of this article. For its construction
the following basic lemma is the crucial observation.

\begin{lem}\label{L:Framing on C}
Let $E \to S^1$ be a spinnable vector bundle of rank $\geq 3$. Then $E$ is isomorphic to the
  trivial bundle and a choice of a spin structure on $E$ determines a framing on $E$.
\end{lem}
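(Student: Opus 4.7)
The plan is to first establish that $E$ is trivial and then to construct, from a chosen spin structure, a framing that is canonical up to homotopy. Triviality will reduce to a standard classifying-space calculation, and the construction of the framing will exploit the fact that $\pi_1(\mathrm{Spin}(r))=0$ for $r\geq 3$.

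First I would note that, by the paper's convention, ``spinnable'' presupposes that $E$ is oriented. Oriented real rank~$r$ bundles over $S^1$ are classified by $[S^1,B\mathrm{SO}(r)]=\pi_1(B\mathrm{SO}(r))=\pi_0(\mathrm{SO}(r))=0$, so $E\cong S^1\times\RR^r$. Equip $E$ with a Euclidean metric and pass to the associated oriented orthonormal frame bundle $P_{\mathrm{SO}}(E)\to S^1$, which is a trivial principal $\mathrm{SO}(r)$-bundle. A framing of $E$ is the same data as a homotopy class of sections of $P_{\mathrm{SO}}(E)$.

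Next, fix a spin structure on $E$. This is, by definition, a principal $\mathrm{Spin}(r)$-bundle $P_{\mathrm{Spin}}(E)\to S^1$ together with a $2{:}1$ equivariant cover $P_{\mathrm{Spin}}(E)\to P_{\mathrm{SO}}(E)$ along $\mathrm{Spin}(r)\to\mathrm{SO}(r)$. Principal $\mathrm{Spin}(r)$-bundles over $S^1$ are classified by $\pi_1(B\mathrm{Spin}(r))=\pi_0(\mathrm{Spin}(r))=0$, so $P_{\mathrm{Spin}}(E)$ admits a global section $\tilde s$. The set of sections of a trivial principal $\mathrm{Spin}(r)$-bundle over $S^1$ is a torsor for $[S^1,\mathrm{Spin}(r)]=\pi_1(\mathrm{Spin}(r))$, which vanishes for $r\geq 3$; hence $\tilde s$ is unique up to homotopy. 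Composing $\tilde s$ with the projection $P_{\mathrm{Spin}}(E)\to P_{\mathrm{SO}}(E)$ yields a section of $P_{\mathrm{SO}}(E)$, well-defined up to homotopy, and therefore a framing $\varphi$ of $E$ depending only on the given spin structure.

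The step that deserves most care is the final one: I have to argue that the framing is genuinely determined by the spin structure, i.e.\ is independent of the auxiliary choices of metric and of the particular lift $\tilde s$. The metric dependence is handled by observing that the space of Euclidean metrics on $E$ is convex, so any two choices yield canonically homotopic frame bundles and trivializations. The dependence on $\tilde s$ is exactly what $\pi_1(\mathrm{Spin}(r))=0$ rules out, which is where the hypothesis $r\geq 3$ is used in an essential way; without it the two sections could differ by the non-trivial loop in $\mathrm{SO}(2)$ and no canonical framing would emerge. A clean way to record the construction is the commutative diagram
\[
  \begin{tikzcd}
    & P_{\mathrm{Spin}}(E) \arrow[d] \\
    S^1 \arrow[ur,"\tilde s"] \arrow[r,"\varphi"'] & P_{\mathrm{SO}}(E)
  \end{tikzcd}
\]
which makes manifest that the produced framing $\varphi$ is precisely the image of the (essentially unique) spin trivialization $\tilde s$.
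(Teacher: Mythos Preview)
Your proof is correct and follows essentially the same route as the paper: both arguments trivialize the $\mathrm{Spin}(r)$-principal bundle over $S^1$, push a global section down to the frame bundle, and invoke $\pi_1(\mathrm{Spin}(r))=0$ for $r\geq 3$ to conclude that the resulting trivialization is unique up to homotopy. Your version is slightly more detailed (you justify triviality via classifying spaces and address metric independence), but the substance is identical.
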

\begin{proof}
  $E$ is isomorphic to the trivial bundles since it is an orientable vector bundle over a circle.
  Fix a spin structure on $E$, i.e. let $F'(E)$ be a $\mathbf{Spin}(n)$-principal bundle over $S^1$
  which is a two-sheeted cover over the frame bundle $F(E)$ of $E$. Let $\pi\colon F'(E)\to F(E)$
  be the projection which is equivariant with respect to the two-sheeted covering
  $\mathbf{Spin}(n)\to \SOg(n)$. Clearly $F'(E)$ is the trivial $\mathbf{Spin}(n)$-principal bundle
  over $S^1$ and denote by $\sigma \colon S^1 \to F'(E)$ a global section. Then $\pi\circ \sigma$
  is a global section of $F(E)$ hence a trivialization of $E \to S^1$. Any other such global section
  $\widetilde\sigma \colon S^1 \to E$ differs from $\sigma$ by a map $\varphi\colon S^1 \to
  \mathbf{Spin}(n)$. Since $\pi_1(\mathbf{Spin}(n))=1$ the map $\varphi$ has to be null-homotopic
  which means that the two trivializations $\pi\circ\sigma$ and $\pi\circ\widetilde\sigma$ have to
  be homotopic, thus they define the same framing on $E$.
\end{proof}
In the same way one proves
\begin{cor}\label{C:1-dim CW complex}
  Let $\Sigma$ be a $1$-dimensional CW-complex (not necessarily connected) and $E\to \Sigma$ a vector bundle of rank $\geq 3$
endowed with a spin structure. Then $E$ is isomorphic to the trivial bundle and the spin structure
induces a framing on $E$.
\end{cor}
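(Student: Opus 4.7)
The plan is to mimic the argument of Lemma~\ref{L:Framing on C} verbatim, with the role of the circle replaced by the $1$-dimensional CW-complex $\Sigma$. The two structural ingredients that made the proof on $S^1$ work are: (i) a principal $\mathbf{Spin}(n)$-bundle over $S^1$ is trivial, and (ii) any map $S^1 \to \mathbf{Spin}(n)$ is null-homotopic because $\pi_1(\mathbf{Spin}(n))=1$. Both of these generalise immediately to $\Sigma$, so the proof essentially runs without modification.

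More concretely, I would first argue that $E$ itself is trivial. Since $E$ is spinnable it is orientable, so its classifying map factors through $B\SOg(n)$ and lifts to $B\mathbf{Spin}(n)$. Because $B\mathbf{Spin}(n)$ is $2$-connected and $\dim\Sigma = 1$, cellular approximation (or the usual obstruction-theoretic argument) forces this classifying map to be null-homotopic, hence $E$ is trivial. Fixing the chosen spin structure gives a principal $\mathbf{Spin}(n)$-bundle $F'(E)\to \Sigma$ which double-covers the frame bundle $F(E)$; by exactly the same dimension/connectivity argument $F'(E)$ is trivial, so it admits a global section $\sigma$. The composition $\pi\circ \sigma$ is then a trivialization of $E\to\Sigma$.

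It remains to check that the induced framing is independent of the choice of $\sigma$. Any other global section $\widetilde\sigma$ differs from $\sigma$ by a map $\varphi\colon \Sigma \to \mathbf{Spin}(n)$. Since $\mathbf{Spin}(n)$ is path-connected and satisfies $\pi_1(\mathbf{Spin}(n))=1$ for $n\geq 3$, any map from a $1$-dimensional CW-complex into $\mathbf{Spin}(n)$ is null-homotopic (applied componentwise to $\Sigma$). Hence $\pi\circ\sigma$ and $\pi\circ\widetilde\sigma$ are homotopic trivializations and define the same framing on $E$.

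I do not anticipate any serious obstacle; the only point requiring a little care is that $\Sigma$ may be disconnected, but since the relevant connectivity properties of $\mathbf{Spin}(n)$ and $B\mathbf{Spin}(n)$ hold separately on each component, the argument of Lemma~\ref{L:Framing on C} goes through unchanged.
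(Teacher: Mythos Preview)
Your proposal is correct and follows precisely the approach intended by the paper, which simply states ``In the same way one proves'' before the corollary. The argument of Lemma~\ref{L:Framing on C} carries over verbatim once one observes that $\mathbf{Spin}(n)$ is $1$-connected (so every map from a $1$-dimensional CW-complex into it is null-homotopic) and that the principal $\mathbf{Spin}(n)$-bundle over $\Sigma$ is trivial for the same reason.
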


\begin{dfn}\label{D:standard framing}
Let $E \to S^1$ be a spinnable vector bundle. The framing induced by the standard spin structure
  on $E$ is called the \emph{standard framing} and from then non-standard spin structure
  the \emph{non-standard framing}.
\end{dfn}

\begin{exmpl}\label{ex: sphare triv}
The spheres $S^{n+1}$ admit a unique spin structure which can be constructed as mentioned
in the preliminaries, i.e. $S^{n+1}$ is the boundary
of the closed unit ball $D^{n+2}$ in $\RR^{n+2}$ which admits a unique spin structure.

Let $S_0 \in S^{n+1}$ be the intersection of a $2$-dimensional linear subspace $W\subset \RR^{n+2}$
with $S^{n+1}$ and denote by $D_0^2 = W \cap D^{n+2}$. Thus after Lemma \ref{L:Framing on C}
$TS^{n+1}|_{S_0}$ inherits a framing from the spin structure. Denote by $\varphi_1,\ldots,\varphi_{n+1}$
a trivialization of this framing, then the framing
\[
  \overline\varphi\colon S_0 \to \textrm{SO}(n+2),\quad x \mapsto (x,\varphi_1(x),\ldots,\varphi_{n+1}(x))
\]
  must be null homotopic in $\textrm{SO}(n+2)$ by the definition of the spin structures of
  $S^{n+1}$ and $TS^{n+1}|_{S_0}$ (such that it lifts to $\textrm{Spin}(n+2)$). Thus
  $\overline\varphi$ must be homotopic the constant framing $x \mapsto (e_1,\ldots,e_{n+2})$, where
  $e_1,\ldots,e_{n+2}$ denotes the canonical basis of $\RR^{n+2}$. In particular this means,
  that $TS^{n+1}|_{S_0}$ inherits the standard framing from the spin structure of
  $S^{n+1}$.
\end{exmpl}

$\Omega_1^{\textrm{fr}}(M)$ possesses a group structure which can be expressed as follows: Having
two $1$-dimensional closed submanifolds $C$ and $C'$ of $M$ which are normally framed, then they
are framed bordant in $M$ to framed submanifolds $\tilde C$ and $\tilde C'$ whose
intersection is empty. Taking the equivalence class of the disjoint union $\tilde C \cup \tilde C'$ with the
respective framings yields an abelian group structure on $\Omega_1^{\textrm{fr}}(M)$, cf.
\cite[Problem 17 and p. 50]{MR0226651}.

Next, we construct a homomorphism $\kappa \colon \Omega_1^{\textrm{fr}}(M) \to \Omega_1^{\textrm{fr}}$,
where $\Omega_1^{\textrm{fr}}$ is  the bordism group of stably framed $1$-dimensional closed
manifolds. Therefore let $(C,\varphi_C)$ be a closed submanifold of dimension $1$,
such that its normal bundle $\nu(C)$ is framed by $\varphi_C$ (thus representing an element in
$\Omega_1^{\textrm{fr}}(M)$). From Lemma \ref{L:Framing on C} the bundle $TM|_C$ inherits
a framing $\varphi_\sigma$ from the spin structure of $M$. Using also the framing of $\varphi_C$ we
obtain a stable tangential framing

\[
  \varepsilon^{n+1} \cong TM|_C \cong TC \oplus \nu(C) \cong TC \oplus \varepsilon^n
\]
which we denote by $\varphi_{\textrm{st}}$.
\begin{prop}\label{P:kappa well defined}
  The bordism class $[C,\varphi_{\textrm{st}}] \in \Omega_1^{\textrm{fr}}$ depends
  only on the bordism class of $[C,\varphi_C] \in \Omega_1^{\textrm{fr}}(M)$.
\end{prop}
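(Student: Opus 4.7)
The plan is to take a framed bordism $(\Sigma,\Phi)$ in $M \times [0,1]$ witnessing $[C_0,\varphi_{C_0}] = [C_1,\varphi_{C_1}]$ and upgrade it to a stably framed bordism in $\Omega_1^{\textrm{fr}}$ between $(C_0,\varphi_{\textrm{st},0})$ and $(C_1,\varphi_{\textrm{st},1})$. The surface $\Sigma$ itself, equipped with a suitable stable tangential framing built from the ambient spin structure and from $\Phi$, will provide this bordism.

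First I would observe that $M \times [0,1]$ is spin (a product of a spin manifold with a contractible manifold), so the rank-$(n+2)$ bundle $T(M\times[0,1])|_\Sigma$ over $\Sigma$ carries an induced spin structure. Since $\Sigma$ is a compact surface with nonempty boundary, it deformation retracts onto a finite $1$-dimensional CW complex. Pulling back the spin bundle along this retraction and invoking Corollary \ref{C:1-dim CW complex}, the bundle $T(M\times[0,1])|_\Sigma$ is trivial and the spin structure determines a framing $\varphi_\sigma^\Sigma$, well-defined up to homotopy. Combining $\varphi_\sigma^\Sigma$ with the normal framing $\Phi$ via the splitting
\[
  T\Sigma \oplus \nu(\Sigma) \cong T(M\times[0,1])|_\Sigma
\]
produces a stable tangential framing $T\Sigma \oplus \varepsilon^n \cong \varepsilon^{n+2}$ of $\Sigma$.

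Next I would verify that this stable framing of $\Sigma$ restricts on each boundary component $C_i$ to $\varphi_{\textrm{st},i}$. By definition of framed bordism, $\Phi|_{C_i} = \varphi_{C_i}$. The spin structure on $T(M\times[0,1])$ restricts on $C_i$ to the direct sum of the spin structure on $TM|_{C_i}$ inherited from $M$ and the trivial structure on the $[0,1]$-direction; by the uniqueness in Lemma \ref{L:Framing on C}, the restriction of $\varphi_\sigma^\Sigma$ to $C_i$ is then homotopic to $\varphi_\sigma^{C_i} \oplus \textrm{id}_{\varepsilon^1}$. Using $T\Sigma|_{C_i} = TC_i \oplus \varepsilon^1$ (the extra summand being the outward normal in $\Sigma$) and absorbing this trivial summand into the stabilization, the restricted stable framing coincides with $\varphi_{\textrm{st},i}$.

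The main obstacle is this last compatibility check: ensuring that the framing from Lemma \ref{L:Framing on C} applied to the ambient bundle over $\Sigma$ is genuinely matched with the framing on $TM|_{C_i}$ used to define $\varphi_{\textrm{st},i}$. Since Lemma \ref{L:Framing on C} only produces the framing up to homotopy, the matching has to be phrased as a naturality statement—most cleanly by lifting a global section of the $\textrm{Spin}$-principal bundle over $\Sigma$ and observing that its restriction to $C_i$ is a valid global section of the restricted $\textrm{Spin}$-principal bundle, hence, by uniqueness of the framing, yields the same stable framing on $TC_i$. Corner issues at $C_i \subset \partial\Sigma \subset \partial(M\times[0,1])$ are dealt with by imposing a cylindrical collar on $\Sigma$ near $C_0$ and $C_1$.
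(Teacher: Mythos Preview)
Your proposal is correct and follows essentially the same route as the paper's proof: both take the normally framed bordism $\Sigma \subset M\times I$, use that $\Sigma$ is homotopy equivalent to a $1$-complex so that Corollary~\ref{C:1-dim CW complex} yields a spin-induced framing of $T(M\times I)|_\Sigma$, and combine this with the given normal framing to obtain a stable tangential framing of $\Sigma$ restricting correctly on the ends. Your write-up is in fact a bit more explicit than the paper's about the boundary compatibility (the extra $\varepsilon^1$ from the $[0,1]$-direction and the collar issue), but the argument is the same.
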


\begin{proof}
  Let $(C',\varphi_{C'})$ be another normally framed closed $1$-dimensional submanifold framed
  bordant to $(C,\varphi_C)$. Thus there is a bordism $\Sigma \subset M \times I$ between
  $C$ and $C'$ such that the normal bundle of $\Sigma$ in $M\times I$ possess a framing
  $\varphi_\Sigma$. By definition restricting $\varphi_\Sigma$ to $C$ and $C'$ yields
  $\varphi_C$ and $\varphi_{C'}$ respectively. Since $\Sigma$ is homotopy equivalent to
  a $1$-dimensional CW-complex and since $M\times I$ inherits a unique spin structure from $M$
  we obtain a framing $\varphi_{\Sigma,\sigma}$ on $T(M\times I)|_{\Sigma}$. Of course the framings
  $\varphi_{\Sigma,\sigma}$ restricted to $C$ and $C'$ are just the framings $\varphi_\sigma$ and
  $\varphi_\sigma'$ respectively (i.e. induced by the spin structure of $TM|_C$ and $TM|_{C'}$).
  Since
  \[
    T(M\times I)|_\Sigma \cong T\Sigma \oplus \nu(\Sigma)
  \]
  the framings $\varphi_{\Sigma,\sigma}$ and $\varphi_\Sigma$ determine a stable framing $\varphi_{\Sigma,\textrm{st}}$ of $T\Sigma$.
  Then $(\Sigma,\varphi_{\Sigma,\textrm{st}})$ is stably framed bordism between $(C,\varphi_{\textrm{st}})$
  and $(C',\varphi_{\textrm{st}}')$.
\end{proof}

\begin{rem}\label{R:cricles generated bordism group}
  As described above, the group structure of $\Omega_{1}^{\textrm{fr}}(M)$ is given by disjoint
  union of submanifolds and their respective normal framings. Let $(C,\varphi)$ be a framed
  $1$-dimensional closed submanifold of $M$ and denote by $C=S_1 \cup \ldots \cup S_k$ the
  connected components of $C$. We may assume that the union is always disjoint. Thus $S_i$ is an
  embedded circle and $\varphi_i := \varphi|_{S_{i}}$ a normal framing of $S_i$. Consequently we
  have
  \[
    [C,\varphi] = \sum_{i=1}^{k}\; [S_i,\varphi_i].
  \]
\end{rem}

\begin{dfn}\label{D:framed circle}
Let $S \subset M$ be an embedded circle and $\varphi$ a framing of $\nu(S)$. We call the bordism
  class $[S,\varphi] \in \Omega_1^{\textrm{fr}}(M)$ a \emph{framed circle of $M$}.
  The corresponding stable class $[S,\varphi_{\textrm{st}}]\in \Omega_1^{\textrm{fr}}$ will be
  called the \emph{the index of $[S,\varphi]$ (with respect to the spin structure of $M$)}
  and will be denoted by $\operatorname{ind}(S,\varphi)$.
\end{dfn}

\begin{dfn}\label{D:invariant}
  Let $M$ be an $(n+1)$-dimensional closed spin manifold. Then we define
  a map
  \[
    \kappa \colon \Omega_1^{\textrm{fr}}(M) \to \Omega_1^{\textrm{fr}},\quad
    [C,\varphi] \mapsto \kappa\left( [C,\varphi]  \right) := \sum_{S \subset C,\atop S\textrm{ connected}}^{}
    \operatorname{ind}(S,\varphi|_S) = [C,\varphi_{\textrm{st}}].
  \]
  We call $\kappa$ the \emph{degree map} of $M$ with respect to the chosen spin structure.
\end{dfn}

\begin{rem}\label{R:kappa homo}
It is clear from the construction that $\kappa$ is a homomorphism.
\end{rem}

\begin{exmpls}\label{Ex:First examples}
\begin{enumerate}[label=(\alph*)]
  \item The spheres $S^{n+1}$ admit a unique spin structure which is induced by
    the closed $(n+2)$-dimensional disc $D^{n+2} \subset \RR^{n+2}$, cf. \cite{MR0157388}.

    Let $S_0$ be the intersection of $S^{n+1}$ with a $2$-dimensional linear subspace $W$ of
    $\RR^{n+2}$. We argued in Example \ref{ex: sphare triv} that $TS^{n+1}|_{S_0}$ inherits
    the standard framing.

    Choose the standard framing $\varphi_0$ on $\nu(S_0)$, then
    \[
      \kappa([S_0,\varphi_0])=0.
    \]
    Consequently the non-standard framing $\varphi_1$ of $\nu(S_0)$ yields
    \[
      \kappa([S_0,\varphi_1])\neq 0.
    \]

  \item\label{ex: depending on spin structure} Let $N$ be a closed, simply connected, spin manifold of dimension $n$. Then $M:=S^1 \times N$
    admits two different spin structures since $H^1(S^1 \times N;\ZZ_2) \cong H^1(S^1;\ZZ_2) \cong
    \ZZ_2$. $M$ is the boundary of $D^2 \times N$ which has up to isomorphism a unique spin
    structure. The two different spin structures on $M$ can be described as follows: One can
    be extended from $M$ to $D^2 \times N$ and the other not. We call the latter one the
    \emph{standard spin structure} and the former one the \emph{non-standard spin structure} of
    $S^1 \times N$.

    For $q_0 \in N$ consider the circle $S_0:= S^1 \times q_0 \subset S^1 \times N$. Clearly
    we have a canonical isomorphism
    \[
      \nu(S_0) \cong S_0 \times T_{q_0}N.
    \]
    Thus choosing a basis in $T_{q_0}N$ gives a framing $\varphi_0$ on $\nu(S_0)$ which extends
    to a framing of $(D^2 \times q_0) \times T_{q_0}N$. Thus we have
    \[
      \kappa_0([S_0,\varphi_0]) =0
    \]
    for the standard spin structure and
    \[
      \kappa_1([S_0,\varphi_0]) \neq 0
    \]
    for the non-standard spin structure.

    For $q_1 \in N$ with $q_0 \neq q_1$ we consider $C = S^1 \times q_0 \cup S^1 \times q_1$
    with fixed normal framing on $S^1 \times q_i$ which gives a framing $\varphi$ on $C$.
    Then $\kappa([C,\varphi])$ is independent of the chosen spin structure of $M$.

    This shows that in general $\kappa$ will depend
    on the spin structure. The next proposition will show how it depends from it.
\end{enumerate}
\end{exmpls}
%

  Suppose $C\subset M$ is a closed $1$-dimensional submanifold. Then $C$ defines a $\ZZ_2$
  fundamental homology class $[C] \in H_1(M;\ZZ_2)$. We denote by $w(C) \in H^n(M;\ZZ_2)$ the
  cohomology class which is the Poincar\'{e} dual of $[C]$.
\begin{prop}\label{P: dependence on spin structure}
  Fix a spin structure $\sigma$ on $M$ and denote by $\kappa$ the degree map of $M$ with respect to the
  chosen spin structure. Choose another spin structure of $M$, which is represented by $\alpha
  \in H^1(M;\ZZ_2)$ and denote by $\kappa^\alpha$ the corresponding degree map. Then we have
  \[
    \kappa([C,\varphi]) = \kappa_\alpha([C,\varphi]) + \delta(\alpha \smile w(C)),
  \]
  where $\delta\colon H^{n+1}(M;\ZZ_2) \to \Omega_1^{\textrm{fr}}$ is the unique isomorphism. Thus
  if $w(C) \smile \alpha=0$ then $\kappa\left( [C,\varphi]  \right)=\kappa^{\alpha}\left( [C,\varphi]  \right)$.
\end{prop}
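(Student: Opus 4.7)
The plan is to reduce to the case of a single embedded circle, analyse how the spin-induced framing on $TM|_S$ changes when the spin structure is twisted by $\alpha$, and recognise the resulting $\ZZ_2$-correction as the Kronecker pairing of $\alpha \smile w(C)$ with $[M]$.

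First, by Remark \ref{R:cricles generated bordism group} and the additivity of $\kappa$, $\kappa_\alpha$ and $w$ over disjoint unions of components, it suffices to prove the identity when $C = S$ is a single embedded circle. For such an $S$, the class $w(S) \in H^n(M;\ZZ_2)$ is Poincar\'e dual to $[S] \in H_1(M;\ZZ_2)$, so
\[
\langle \alpha \smile w(S),[M]\rangle \;=\; \langle \alpha, w(S) \frown [M]\rangle \;=\; \langle i^\ast\alpha,[S]\rangle,
\]
with $i\colon S \hookrightarrow M$ the inclusion. Since $\delta\colon H^{n+1}(M;\ZZ_2) \xrightarrow{\sim} \Omega_1^{\textrm{fr}} \cong \ZZ_2$ is the evaluation-on-$[M]$ isomorphism, this pairing detects exactly whether $i^\ast \alpha \in H^1(S;\ZZ_2) \cong \ZZ_2$ vanishes.

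Next I would compare the two framings $\varphi_\sigma$ and $\varphi_\sigma^\alpha$ of $TM|_S$ produced by Lemma \ref{L:Framing on C} from the spin structures $\sigma$ and $\sigma + \alpha$. Their restrictions to $TM|_S$ differ by $i^\ast\alpha \in H^1(S;\ZZ_2)$. If $i^\ast\alpha = 0$ the two spin structures on $TM|_S$ coincide, hence so do the induced framings and the stable tangential framings $\varphi_{\textrm{st}}$ and $\varphi_{\textrm{st}}^\alpha$, so $\kappa([S,\varphi]) = \kappa_\alpha([S,\varphi])$. If $i^\ast\alpha \ne 0$ the two restrictions are the standard and non-standard spin structures of Section \ref{S:Preliminaries}, and the induced framings are exactly the standard and non-standard framings of Definition \ref{D:standard framing}; these differ by a loop $S \to \SOg(n+1)$ representing the generator of $\pi_1(\SOg(n+1)) \cong \ZZ_2$. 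Composing with the unchanged normal framing $\varphi$ of $\nu(S)$ shows that $\varphi_{\textrm{st}}$ and $\varphi_{\textrm{st}}^\alpha$ differ by the same non-trivial loop, which by the explicit description of the non-zero element of $\Omega_1^{\textrm{fr}}(S^{l})$ at the end of Section \ref{S:Preliminaries} represents the non-zero class in $\Omega_1^{\textrm{fr}} \cong \ZZ_2$.

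Combining the two cases, $\kappa([S,\varphi]) - \kappa_\alpha([S,\varphi]) = \langle i^\ast \alpha,[S]\rangle = \delta(\alpha \smile w(S))$ in $\Omega_1^{\textrm{fr}}$, and summing over the components of a general $C$ finishes the argument. The main obstacle I anticipate is pinning down rigorously the identification in the $i^\ast\alpha \ne 0$ case: one must run the construction of Lemma \ref{L:Framing on C} in parallel for the two $\mathbf{Spin}(n+1)$-bundles $F'(TM|_S)$ and $F'^{\,\alpha}(TM|_S)$ and compare their global sections after projection to $F(TM|_S)$, in order to see that the resulting comparison map $S \to \SOg(n+1)$ represents the non-trivial class of $\pi_1(\SOg(n+1))$ precisely when $i^\ast\alpha \ne 0$; once this bookkeeping is in place, the rest of the argument is purely formal.
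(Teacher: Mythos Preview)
Your proposal is correct and follows essentially the same route as the paper: reduce to a single circle, observe that the restricted spin structures on $TM|_S$ differ by $i^\ast\alpha$ so that $\operatorname{ind}$ and $\operatorname{ind}_\alpha$ differ by $\delta(i^\ast\alpha)$, translate this into $\delta(\alpha\smile w(S))$ via Poincar\'e duality, and then sum over components. If anything, you spell out the framing comparison in the $i^\ast\alpha\neq 0$ case more explicitly than the paper, which simply asserts $\operatorname{ind}(S,\varphi)=\operatorname{ind}_\alpha(S,\varphi)+\delta(i^\ast\alpha)$ ``from the definition of the index''; the bookkeeping you flag as an obstacle is exactly what is left implicit there.
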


\begin{proof}
  Assume first that $(S,\varphi)$ is a framed circle and $i \colon S \to M$ is the inclusion. The spin structure
  $\sigma$ induces a spin structure on $TM|_S = i^\ast(TM)$ and the spin structure induced
  by $\alpha$ is represented by $i^\ast(\alpha) \in H^1(S;\ZZ_2)$. Of course $TM|_S$ can have at
  most two different spin structures. From the definition of the index we have
  \[
    \operatorname{ind}(S,\varphi) = \operatorname{ind}_\alpha(S,\varphi) + \delta(i^\ast(\alpha))
  \]
  where $\operatorname{ind}$ is defined by $\sigma$, $\operatorname{ind}_\alpha$ by $\alpha$
  and $\delta \colon H^1(S;\ZZ_2) \to \Omega_1^{\textrm{fr}}$ the unique isomorphism.

  Let $[S] \in H_1(S;\ZZ_2)$ be the $\ZZ_2$ fundamental
  class of $S$, then $i^\ast(\alpha)\frown [S] \in H_0(S;\ZZ_2)$, which is mapped under $i_\ast$
  to $\alpha \frown i_\ast([S]) \in H_0(M;\ZZ_2)$. Let $[M] \in H_n(M;\ZZ_2)$ denote the $\ZZ_2$
  fundamental class of $M$. Then we compute
  \[
    \alpha \frown i_\ast([S]) = \alpha \frown \left( w(S) \frown [M] \right)
    =\left( \alpha\smile w(S)  \right)\frown [M],
  \]
  where we used that $i_\ast([S])$ is Poincar\'e dual to $w(S)$. Since $ \cdot  \frown [S]$ and
  $ \cdot \frown [M]$ are isomorphisms by Poincar\'{e} duality and $i_\ast \colon
  H_0(S;\ZZ_2) \to H_0(M;\ZZ_2)$ is also an isomorphisms because $S$ and $M$ are connected we infer
  \[
    \operatorname{ind}(S,\varphi) = \operatorname{ind}_\alpha(S,\varphi) + \delta(\alpha\smile w(S))
  \]
  where now $\delta \colon H^{n+1}(M;\ZZ_2) \to \Omega_1^{\textrm{fr}}$ is again the unique
  isomorphism.

  Consider now $(C,\varphi)$  with the disjoint union $C= S_1 \cup \ldots S_k$ and $\varphi_j := \varphi|_{S_j}$,
  such that $S_j$ is connected. With the previous computations we have
  \[
    \kappa([C,\varphi]) =\sum_{j=1}^{k} \left( \operatorname{ind}_\alpha(S_j,\varphi_j) +\delta(\alpha
    \smile w(S_j)  \right) = \kappa_\alpha([C,\varphi]) + \delta(\alpha \smile w(C)).
  \]
  and the proposition follows.
\end{proof}
%
We continue with the description of the \glqq dual\grqq\, short exact sequence to \eqref{eq:SES}.
There is a natural group homomorphism $\Omega_1^{\textrm{fr}}(M) \to \Omega_1^{\textrm{SO}}(M)$,
which assigns to every framed $1$-submanifold $[C,\varphi]$ the oriented bordism class induced by
the orientation framing $\varphi$. This is well-defined since every normally framed bordism in $M$ is also an
oriented bordism ($M$ is oriented). By the seminal work of \emph{Thom} \cite{MR0061823} we have an
isomorphism
\[
  \Omega_1^{\textrm{SO}}(M) \to H_1(M;\ZZ)
\]
which assigns every oriented submanifold its fundamental class in $H_1(M;\ZZ)$. Thus we obtain
a group homomorphism
\begin{equation}\label{eq:Phi}
  \Phi \colon \Omega_1^{\textrm{fr}}(M) \to H_1(M;\ZZ)
\end{equation}
which is clearly surjective. The kernel of $\Phi$ is at most isomorphic to $\ZZ_2$ and elements
of the kernel are represented by framed circles $(S,\varphi)$ such that $S$ is oriented null-bordant,
i.e. there is an embedded oriented disc $D\subset M \times I$ with the properties $\partial D =S$ and
the orientations of $\partial D$ and $S$ agree. We may equip the normal bundle of $S$ with two
framings. If both framings can be extended over $D$ then the kernel is trivial and otherwise
$\ZZ_2$.

\begin{lem}\label{L:kernerl isomorphic to omega1fr}
  The restricted degree map $\kappa|_{\ker\Phi} \colon \ker\Phi \to \Omega_1^{\textrm{fr}}$ is an isomorphism.
\end{lem}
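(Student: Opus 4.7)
The plan is to combine the bound $|\ker\Phi|\leq 2$ (established in the paragraph directly preceding the lemma) with an explicit construction of a class in $\ker\Phi$ on which $\kappa$ is non-zero. Surjectivity together with this a priori bound then forces $\kappa|_{\ker\Phi}$ to be an isomorphism onto $\Omega_1^{\textrm{fr}}\cong\ZZ_2$.

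First I would work entirely inside a single coordinate ball $U\subset M$ diffeomorphic to $\RR^{n+1}$. Fix a standard round $2$-disc $D\subset U$ and set $S:=\partial D$. Since $S$ bounds $D$ in $M$, $[S]=0\in H_1(M;\ZZ)$ and any framing of $\nu(S)$ produces a class in $\ker\Phi$. Equip $\nu(S)$ with a framing $\varphi_1$ that differs from the reference Euclidean framing $\varphi_0$ (induced by the ambient trivialisation $TM|_U\cong U\times\RR^{n+1}$) by a loop $S\to \SOg(n)$ whose class in $\pi_1(\SOg(n))\cong\ZZ_2$ is non-trivial.

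Next I would show $\kappa([S,\varphi_1])\neq 0$. The spin structure of $M$ restricts to the unique spin structure on $U$, which itself extends over $D$; hence the argument of Example~\ref{ex: sphare triv} applies verbatim, and the framing $\varphi_\sigma$ of $TM|_S$ induced by the spin structure agrees with the constant Euclidean framing. Consequently the stable tangential framing $\varphi_{\textrm{st}}$ of $S$ computed from $\varphi_\sigma$ and $\varphi_1$ is, up to homotopy, the Euclidean framing twisted by a generator of $\pi_1(\SOg(n))$, which is precisely the generator of $\Omega_1^{\textrm{fr}}=\pi_1^S$ described in the discussion just before Proposition~\ref{P:kappa well defined}. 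Therefore $\kappa([S,\varphi_1])\neq 0$, $\kappa|_{\ker\Phi}$ is surjective, and the lemma follows.

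The main obstacle is conceptual bookkeeping rather than calculation: one must verify that non-standard framing on $\nu(S)$ combined with the spin framing on $TM|_S$ genuinely reproduces the non-trivial stable framing of $TS$ rather than cancelling it. This reduces to the criterion that a stable tangential framing of a circle is trivial if and only if it extends over a stably framed embedded $2$-disc. In our setup both $\varphi_\sigma$ and the reference normal framing $\varphi_0$ extend over $D\subset U$, while $\varphi_1$ does not; this pins down the computation and finishes the argument.
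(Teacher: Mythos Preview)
Your proof is correct and follows essentially the same approach as the paper: both place a small circle $S$ inside a coordinate chart, equip its normal bundle with a framing that does not extend over the bounding disc, and then argue that the resulting stable tangential framing of $S$ represents the non-trivial class in $\Omega_1^{\textrm{fr}}$. Your write-up is simply more explicit about why the spin-induced framing $\varphi_\sigma$ on $TM|_S$ agrees with the Euclidean one and about the extension criterion over the disc, whereas the paper compresses this into a footnote and a one-line appeal to the Pontryagin picture in $\RR^{n+1}$.
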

\begin{proof}
 Since $\kappa$ is a homomorphism it will map the neutral element of $\ker\Phi$ to that of
  $\Omega_1^{\textrm{fr}}$. Thus it suffices to show the
  following: Let $(S,\varphi)$ be a framed circle such that $S$ is oriented null-bordant in $M$ but
  $\varphi$ cannot be extended over the nullbordism. We have to show $\kappa([S,\varphi])\neq 0$,
  where $0$ denotes the neutral element of $\Omega_1^{\textrm{fr}}$. We may assume that $S$ lies
  in a chart of $M$
  \footnote{Take a small embedded closed disc and choose a framing on the circle bounding the disc
  which cannot be extended over a proper embedded disc in $M \times I$.}.
  Thus we may embed $S$ into $\RR^{n+1}$ endowed
  with a normal framing, which cannot be extended over a nullbordism in $\RR^{n+1}$. Hence
  the index of $(S,\varphi)$ defines a non-trivial element in $\Omega_1^{\textrm{fr}}$ (note
  that since $w(S)=0$ the element $\kappa[(S,\varphi)]$ does not depend on the spin structure of
  $M$, cf. Lemma \ref{P: dependence on spin structure}).
\end{proof}

Thus we may identify $\ker\Phi$ with $\Omega_1^{\textrm{fr}}$ via $(\kappa|_{\ker\Phi})^{-1}$ and
we obtain a short exact sequence
\[
  0 \longrightarrow \Omega_1^{\textrm{fr}} \longrightarrow \Omega_1^{\textrm{fr}}(M)
  \longrightarrow H_1(M;\ZZ) \longrightarrow 0
\]
and from Lemma \ref{L:kernerl isomorphic to omega1fr} $\kappa$ is a splitting map. Therefore

\begin{thm}\label{T:BordismGroupSplits}
Let $M$ be an $(n+1)$-dimensional closed spin manifold. Choose a spin structure on $M$. Then
\[
  \Omega_1^{\textrm{fr}}(M) \longrightarrow H_1(M;\ZZ) \oplus \Omega_1^{\textrm{fr}},
  \quad [C,\varphi] \mapsto ([C],\kappa([C,\varphi]))
\]
is an isomorphism of abelian groups.
\end{thm}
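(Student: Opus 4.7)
The plan is to recognize the theorem as a formal consequence of the splitting lemma applied to the short exact sequence
\[
  0 \longrightarrow \Omega_1^{\textrm{fr}} \longrightarrow \Omega_1^{\textrm{fr}}(M) \stackrel{\Phi}{\longrightarrow} H_1(M;\ZZ) \longrightarrow 0
\]
that was derived just before the statement, together with Lemma \ref{L:kernerl isomorphic to omega1fr}. The map in the theorem has components $\Phi$ and $\kappa$, so the whole argument reduces to (i) confirming homomorphism, (ii) surjectivity, (iii) injectivity, where the middle step uses that $\Phi$ is surjective by Thom's theorem and the outer two steps use that $\kappa|_{\ker\Phi}$ is an isomorphism.

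First I would check that $\Psi\colon [C,\varphi]\mapsto ([C],\kappa([C,\varphi]))$ is a homomorphism. The first component is a homomorphism because the group operation on $\Omega_1^{\textrm{fr}}(M)$ is disjoint union (after making representatives disjoint by a framed bordism, which is always possible since $n+1\geq 4$) and Thom's isomorphism $\Omega_1^{\textrm{SO}}(M)\to H_1(M;\ZZ)$ sends this to the sum of fundamental classes. The second component is a homomorphism by Remark \ref{R:kappa homo}.

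Next, surjectivity: given $(h,y)\in H_1(M;\ZZ)\oplus\Omega_1^{\textrm{fr}}$, use surjectivity of $\Phi$ to pick any $[C_1,\varphi_1]$ with $\Phi([C_1,\varphi_1])=h$. Adjusting this class by an element of $\ker\Phi$ leaves the first coordinate unchanged while shifting the second by an arbitrary element of $\Omega_1^{\textrm{fr}}$, since $\kappa|_{\ker\Phi}$ is an isomorphism by Lemma \ref{L:kernerl isomorphic to omega1fr}. Concretely, choose $[C_2,\varphi_2]\in\ker\Phi$ with $\kappa([C_2,\varphi_2])=y-\kappa([C_1,\varphi_1])$; then $[C_1,\varphi_1]+[C_2,\varphi_2]$ maps to $(h,y)$. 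For injectivity, suppose $\Psi([C,\varphi])=0$; then $[C]=0$ places $[C,\varphi]\in\ker\Phi$, and $\kappa([C,\varphi])=0$ forces $[C,\varphi]=0$ by the same lemma.

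There is no real obstacle here: all the substantive geometry was already done in Proposition \ref{P:kappa well defined} (well-definedness of $\kappa$) and Lemma \ref{L:kernerl isomorphic to omega1fr} (that $\kappa$ detects the framing obstruction on null-homologous circles). The only minor subtlety in writing it out cleanly is ensuring that the disjoint-union representation of the group law can be used freely, which is fine since $\dim M\geq 4$ makes any two $1$-dimensional submanifolds framed bordant to disjoint ones.
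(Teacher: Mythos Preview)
Your proposal is correct and follows essentially the same approach as the paper: the paper simply observes that the short exact sequence together with Lemma~\ref{L:kernerl isomorphic to omega1fr} shows $\kappa$ is a splitting map, and states the theorem as an immediate consequence. Your write-up just makes the application of the splitting lemma explicit by checking homomorphism, surjectivity, and injectivity separately.
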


We finish this section by giving an alternative way to compute the index of a framed circle in the
spirit of Pontryagin \cite{MR0115178}.
Suppose $[S,\varphi]$ is a framed circle, thus there are trivializations of $\nu(S)$ and $TM|_{S}$
such that we obtain the stable framing
\[
  \varepsilon^{n+1} \cong TS \oplus \varepsilon^n
\]
(where is can assume that the isomorphism is orientation preserving). Denote by $v_1,\ldots,v_{n+1}$
and by $w_2,\ldots,w_{n+1}$ the trivializations of $TM|_S$ and $\nu(S)$ respectively. Let $w_1$ be
a trivialization of $TS$. Let $\Phi \colon TS \oplus \varepsilon^n \to \varepsilon^{n+1}$ be the
isomorphism of the stable framing, then there is a matrix $A=(A_{ij}) \colon S \to
\mathrm{GL}^{+}(n+1)$ (where $\textrm{GL}^+(n+1)$ is the set of all invertible real matrices of
size $(n+1) \times (n+1)$ with positive determinant) such that such that
\[
  \Phi(w_i) = \sum_{j=1}^{n+1} A_{ij} \cdot v_j.
\]
Since $\mathrm{SO}(n+1)$ is a strong deformation retract of $\mathrm{GL}^+(n+1)$ we have $\pi_1
\left(\mathrm{GL}^+(n+1)\right) \cong \ZZ_2$. The map $A \colon  S \to \mathrm{GL}^+(n+1)$ defines
an element $[A] \in \pi_1\left( \mathrm{GL}^+(n+1)  \right)$. Changing the homotopy classes of
trivializations of $TM|_S$ and $\nu(S)$ does not change $[A]$. Furthermore $[A]$ is also independent
of the choice of trivializations of $TS$.

According to the Preliminaries in Section \ref{S:Preliminaries} any stable framing
$\operatorname{ind}(S,\varphi)$ can be represented by a framed circle $S_0$ in $\RR^{n+1}$
such that
\[
  TS_0 \oplus \varepsilon^n \cong TS_0 \oplus\nu(S_0) \cong (T\RR^{n+1})|_{S_0} \cong \varepsilon^{n+1}
\]
recovers the stable framing of $(S,\varphi)$.
It follows that
\[
  \operatorname{ind}(S,\varphi) = \delta(S_0,\varphi_0),
\]
where $\delta$ is the invariant constructed by Pontryagin, \cite[Theorem 20]{MR0115178}. We will
use a different notation: Let us denote by $[A]$ the homotopy class constructed above from the
stable framing and by $\overline{[A]}$ the element $[A]+1 \in \Omega_1^{\textrm{fr}}(S^n)\cong
\ZZ_2$ where $1$ is the non-trivial element. Thus we proved

\begin{lem}\label{L:stable class as matrix}
  We identify $\pi_1\left( \mathrm{GL}^+(n+1)  \right)$ with $\Omega_1^{\textrm{fr}}$ by the unique
  isomorphism $\ZZ_2 \to \ZZ_2$. Then
  \[
    \overline{[A]} = \operatorname{ind}(S,\varphi).
  \]
\end{lem}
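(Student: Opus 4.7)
The plan is to reduce the statement to Pontryagin's theorem (Theorem 20 of \cite{MR0115178}), which the author has already indicated is the guiding principle: one has $\operatorname{ind}(S,\varphi)=\delta(S_{0},\varphi_{0})$ for any framed circle $(S_{0},\varphi_{0})$ in $\RR^{n+1}$ representing the same stable class. Since Pontryagin's formula reads $\delta(S_{0},\varphi_{0})=[A_{0}]+(n(S_{0})\bmod 2)$ and $S_{0}$ is connected, the only thing to verify is that the matrix $A_{0}\colon S_{0}\to \SOg(n+1)$ produced by Pontryagin's construction from the embedded circle in $\RR^{n+1}$ represents the same homotopy class in $\pi_{1}(\GLg^{+}(n+1))$ as the matrix $A$ attached to $(S,\varphi)$ in the statement.

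To implement this I would, first, fix trivializations $v_{1},\ldots,v_{n+1}$ of $TM|_{S}$ coming from the spin structure (as in Lemma \ref{L:Framing on C}), together with $w_{1}$ of $TS$ and $w_{2},\ldots,w_{n+1}$ of $\nu(S)$, and form the matrix $A$ as in the preceding paragraph. Next I would transport the whole framed circle $(S,\varphi)$ to $\RR^{n+1}$ using the Pontryagin--Thom correspondence recalled in Section \ref{S:Preliminaries}: embed $S$ into a small Euclidean chart of $M$, identify $TM|_{S}$ with $\varepsilon^{n+1}|_{S}\subset T\RR^{n+1}|_{S_{0}}$ via the stable framing $\varphi_{\textrm{st}}$, and push forward the trivializations $w_{1},\ldots,w_{n+1}$. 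Under this transport the $v_{i}$ become the standard constant vector fields $E_{1},\ldots,E_{n+1}$ on $\RR^{n+1}$ (because the standard framing of the tangent bundle of $D^{n+2}\supset S^{n+1}$ restricts to the spin framing, cf. Example \ref{ex: sphare triv}), and the $w_{i}$ become exactly the tangent + normal framing that Pontryagin uses to define $A_{0}$. Hence, as classes in $\pi_{1}(\GLg^{+}(n+1))\cong\pi_{1}(\SOg(n+1))\cong\ZZ_{2}$, we have $[A]=[A_{0}]$.

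Combining the two steps gives
\[
\operatorname{ind}(S,\varphi)=\delta(S_{0},\varphi_{0})=[A_{0}]+1=[A]+1=\overline{[A]}
\]
under the identification $\pi_{1}(\GLg^{+}(n+1))\cong\Omega_{1}^{\textrm{fr}}\cong\ZZ_{2}$, which is the claim.

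The main obstacle in writing this out carefully is the verification at the end of the second paragraph: one must check that the ``background'' frame provided by the spin structure of $M$ corresponds, after the Pontryagin--Thom transport of $(S,\varphi)$ into $\RR^{n+1}$, to the standard constant frame of $\RR^{n+1}$ up to a null-homotopic map $S\to\SOg(n+1)$. This is exactly the content of Example \ref{ex: sphare triv} applied in a chart, together with the fact that changing the trivializations of $TM|_{S}$, $\nu(S)$ or $TS$ within their homotopy classes does not alter $[A]$ (as was observed just before the lemma). Once this identification is pinned down, the lemma follows from Pontryagin's computation with no further calculation.
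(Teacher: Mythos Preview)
Your overall strategy --- reduce to Pontryagin's theorem via $\operatorname{ind}(S,\varphi)=\delta(S_0,\varphi_0)$ and then identify $[A]$ with Pontryagin's $[A_0]$ --- is precisely the paper's approach; the proof is the paragraph immediately preceding the lemma, which ends with ``Thus we proved''. There is, however, a genuine problem with how you carry out the identification $[A]=[A_0]$.

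You propose to ``embed $S$ into a small Euclidean chart of $M$'' in order to transport the framings to $\RR^{n+1}$. This is not possible in general: $S$ may represent a non-trivial class in $\pi_1(M)$ and hence cannot lie in any chart. The appeal to Example~\ref{ex: sphare triv} is a red herring for the same reason --- that example deals with circles on spheres, not on an arbitrary spin manifold $M$. Consequently the ``main obstacle'' you single out in your last paragraph is not only an obstacle, it is a step that actually fails.

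The paper sidesteps this entirely by observing that $[A]$ is built purely from the \emph{abstract} stable framing isomorphism $\Phi\colon TS\oplus\varepsilon^{n}\to\varepsilon^{n+1}$ (together with the choices of trivializations $v_i$, $w_i$, whose homotopy classes were already noted not to affect $[A]$). No embedding of $S$ is required. The Euclidean representative $(S_0,\varphi_0)$ is chosen from the outset so that its stable framing, computed from the constant frame of $\RR^{n+1}$, \emph{is} the class of $\Phi$; hence $[A]=[A_0]$ tautologically. With this in place, your final chain of equalities $\operatorname{ind}(S,\varphi)=\delta(S_0,\varphi_0)=[A_0]+1=[A]+1=\overline{[A]}$ is correct and completes the proof.
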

\section{Computation of $\pi^n(M)$}\label{S:computation of pinM}

We start this section to explain the group structure of $\pi^n(M)$. Let $j \colon S^{n} \vee S^n \to S^n \times S^n$ be the
inclusion of the ($2n-1$)-skeleton of $S^n\times S^n$ (endowed with the standard CW structure)
then, since $M$ is $n+1$-dimensional CW complex, the induced map $j_\# \colon [M,S^n\vee S^n] \to
[M,S^n\times S^n]$ is an isomorphism. For $f,g \in \pi^n(M)$ the group structure is defined by \[
  f + g := (\id_{S^n} \vee\id_{S^n})_\# \circ (j_\#)^{-1}(f\times g). \] This makes $\pi^n(M)$ to
an abelian group.

Now, let $f \colon M \to S^n$ be a differentiable map and $x_0 \in S^n$ a regular value. We orient $S^n$
by the normal vector field pointing outwards and the standard orientation of $\RR^{n+1}$.

Let $\Psi \colon \pi^n(M) \to H^n(M;\ZZ)$ be the map $\Psi([f]):=f^{\ast}\sigma$ where $\sigma \in
H^n(S^n;\ZZ)$ is a fixed generator.
We define the analogous degree map $\kappa \colon \pi^n(M) \to \pi_1^S$, where $\pi_1^S$ is the
first stable homotopy group of spheres, as follows: $\kappa$ is the composition of
\[
  \pi^n(M) \overset{\sim}{\longrightarrow} \Omega_1^{\textrm{fr}}(M) \overset{\kappa}{\longrightarrow} \Omega_1^{\textrm{fr}}
  \overset{\sim}{\longrightarrow} \pi_1^S.
\]
where the first and the last isomorphism is again induced by the Pontryagin-Thom isomorphism.

\begin{thm}\label{T:cohomotopy computation}
  Let $M$ be a closed $(n+1)$-dimensional spin manifold. Then
  \begin{enumerate}[label=(\alph*)]
    \item\label{T:cc_a} The generator of $\ker\Psi \cong \ZZ_2$ is given by the homotopy class of the map $\eta\circ  \omega \colon
      M\to S^{n+1}$, where $\eta$ represents a generator of $\pi_{n+1}(S^n)$ and $\omega \colon M \to S^{n+1}$
      is a map of odd degree. Thus $\ker\Psi \cong \pi_{n+1}(S^n)$.
    \item\label{T:cc_b} Identifying $\pi_1^S$ with $\pi_{n+1}(S^n)$ the degree map $\kappa \colon \pi^n(M) \to
      \pi_1^S$ splits the short exact sequence \eqref{eq:SES}. Thus we have
      \[
        \pi^n(M) \longrightarrow H^n(M;\ZZ) \oplus \pi_{n+1}(S^n),\quad
        [f] \mapsto (f^\ast\sigma,\kappa([f])).
      \]
      is an isomorphism of abelian groups.
  \end{enumerate}
\end{thm}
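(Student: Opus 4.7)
The approach is to translate everything to the framed bordism side via the Pontryagin--Thom isomorphism and to reuse the results of Section \ref{S:Index of framed circles}. The first task is to check that under the identifications $\pi^n(M)\cong \Omega_1^{\textrm{fr}}(M)$ (Pontryagin--Thom) and $H^n(M;\ZZ)\cong H_1(M;\ZZ)$ (Poincar\'e duality), the Hurewicz-type map $\Psi$ corresponds exactly to the map $\Phi$ of \eqref{eq:Phi}. This amounts to the classical fact that for a regular value $x_0\in S^n$ of $f\colon M\to S^n$, the cohomology class $f^\ast\sigma$ is Poincar\'e dual to the fundamental class of $f^{-1}(x_0)$. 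Once this is in place, $\ker\Psi\cong\ker\Phi$, and by Lemma \ref{L:kernerl isomorphic to omega1fr} this kernel is isomorphic to $\Omega_1^{\textrm{fr}}\cong \ZZ_2$, while the cohomotopy degree map is by definition the one on $\Omega_1^{\textrm{fr}}(M)$ transported through Pontryagin--Thom.

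For part \ref{T:cc_a}, membership $[\eta\circ\omega]\in\ker\Psi$ is immediate: $(\eta\circ\omega)^\ast\sigma=\omega^\ast(\eta^\ast\sigma)$ and $\eta^\ast\sigma\in H^n(S^{n+1};\ZZ)=0$. To show that $[\eta\circ\omega]$ is the nontrivial element, I would choose a convenient representative of $\omega$: by the Hopf theorem any $\omega\colon M\to S^{n+1}$ of odd degree $d$ is homotopic to a map which is constant outside $|d|$ disjoint embedded open disks $D_1,\ldots,D_{|d|}\subset M$ and, on each $D_i$, a standard degree $\pm 1$ collapse onto $S^{n+1}$ (with signs summing to $d$). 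For a regular value $x_0$ of $\eta$, the framed preimage $(\eta\circ\omega)^{-1}(x_0)$ is then the disjoint union of $|d|$ framed copies of $\eta^{-1}(x_0)$, each contained in a chart of $M$. Since $\eta$ generates $\pi_{n+1}(S^n)$, the framed circle $\eta^{-1}(x_0)\subset S^{n+1}$ represents the nontrivial element of $\Omega_1^{\textrm{fr}}(S^{n+1})\cong \ZZ_2$, and the argument in the proof of Lemma \ref{L:kernerl isomorphic to omega1fr} shows that each such copy in a small ball of $M$ has nontrivial index. By additivity of $\kappa$ (Remark \ref{R:kappa homo}) one obtains $\kappa([\eta\circ\omega])=|d|\cdot 1 = 1\in\ZZ_2$ because $d$ is odd, so $[\eta\circ\omega]$ generates $\ker\Psi$.

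Part \ref{T:cc_b} is then essentially formal. The map $\kappa$ is a group homomorphism, and by part \ref{T:cc_a} combined with Lemma \ref{L:kernerl isomorphic to omega1fr} its restriction to $\ker\Psi$ is an isomorphism onto $\pi_1^S\cong\pi_{n+1}(S^n)$. Hence $\kappa$ splits the short exact sequence \eqref{eq:SES}, and the claimed isomorphism $\pi^n(M)\to H^n(M;\ZZ)\oplus\pi_{n+1}(S^n)$, $[f]\mapsto (f^\ast\sigma,\kappa([f]))$, is nothing but the cohomotopy incarnation of Theorem \ref{T:BordismGroupSplits}.

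The main obstacle is really the preliminary compatibility step, together with the choice of representative: one must verify cleanly that $\Psi$ corresponds to $\Phi$ under Pontryagin--Thom and Poincar\'e duality with correct orientation conventions, and justify that an odd-degree $\omega$ may be deformed into the ``disjoint sum of local degree $\pm 1$ collapses'' form used above. Both facts are classical in differential topology, but they are precisely what links the homotopy-theoretic side of this section to the framed-bordism machinery developed earlier, and they are what makes the mod $2$ count give the nontrivial generator.
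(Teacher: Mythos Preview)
Your proposal is correct and follows essentially the same route as the paper: choose a convenient representative of $\omega$ so that the Pontryagin manifold of $\eta\circ\omega$ is a disjoint union of an odd number of framed circles, each contained in a chart and each having nontrivial index, then sum. The only differences are cosmetic: you make explicit the identification of $\Psi$ with $\Phi$ via Pontryagin--Thom and Poincar\'e duality (which the paper leaves implicit), and you allow local degrees $\pm 1$ whereas the paper arranges all local degrees to be $+1$; neither affects the mod $2$ count.
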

\begin{proof}
  Clearly we have $[\eta\circ \omega ] \in \ker\Psi$. For \ref{T:cc_a} it is enough to check that
  $\kappa([\eta\circ \omega])$ is the zero in $\pi_1^S$. We choose an odd degree map $\omega \colon
  M \to S^{n+1}$ as follows: Let
  $\{p_1,\ldots,p_l\}$ be the preimage of a regular value $y_0$ and choose open sets $U_1,\ldots,U_l
  \subset M$ as well as
  $V\subset S^{n+1}$ such that for all $i=1,\ldots,l$
  \begin{enumerate}[label=(\alph*)]
    \item $U_i$ and $V$ are contractible,
    \item $p_i \in U_i$ and $y_0 \in V$,
    \item there are charts $\psi_i\colon U_i \to \RR^{n+1}$, $\psi \colon V \to \RR^{n+1}$,
    \item $\omega_i :=\omega|_{U_i}$ is an orientation preserving diffeomorphism onto $V$.
  \end{enumerate}
  Since $p$ has odd degree, $l$ has to be an odd number (such maps exists e.g. using the
  Pontryagin-Thom construction).
  Furthermore let $x_0 \in S^n$ be a regular value of $\eta$ and $S_0 =\eta^{-1}(x_0)$. We may assume
  that $S_0$ is connected (e.g. see \cite[Theorem C]{MR0226651}) and $S_0 \subset V$. Let
  $\varphi_0$ be the framing of $\nu(S_0)$ induced by $\eta$, then $0 \neq [S_0,\varphi_0]\in
  \Omega_1^{\textrm{fr}}(S^{n+1})\cong \pi_{n+1}(S^n) \cong \ZZ_2$ and therefore by definition we have
  $\operatorname{ind}(S_0,\varphi_0) \neq 0$.

  Denote by $S_i := \omega_i^{-1}(S_0)$ and frame $\nu(S_i)$ by $\varphi_0$ and $d\omega_i$.
  Then $C= S_1\cup \ldots \cup S_l$ together with the framings $\varphi_i$ is a Pontryagin manifold
  for $\eta\circ\omega$ to the regular value $x_0$. Note that $w(S_i)=0$ for $i=1,\ldots,l$,
  since they are contained in a chart of $M$. By Proposition \ref{P: dependence on spin
  structure} this means that their indices do not depend on the spin structure of $M$.
  Clearly we deduce $\operatorname{ind}(S_i,\varphi_i)
  = \operatorname{ind}(S_0,\varphi_0)\neq 0$ for all $i=1,\ldots,l$ and from that we infer
  \[
    \kappa([\eta\circ\omega]) = \sum_{i=1}^{l} \operatorname{ind}(S_i,\varphi_i)
    =l \cdot \operatorname{ind}(S_0,\varphi_0) \neq 0
  \]
  since $l$ is odd, which proves \ref{T:cc_a}.

  Part \ref{T:cc_b} follows directly from part \ref{T:cc_a}.
\end{proof}

\begin{cor}\label{C:Cohomotopy simply connected}
Suppose $M$ is simply connected, then, up to homotopy, there are exactly two maps $M \to S^n$ and
  one of them is the constant map. The homotopy class of the non-trivial map is represented by
  $\eta\circ \omega \colon M \to S^n$, see Theorem \ref{T:cohomotopy computation}.
\end{cor}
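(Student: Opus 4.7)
The plan is to apply Theorem \ref{T:cohomotopy computation}(b) and show that the $H^n(M;\ZZ)$ summand vanishes under the simple connectivity hypothesis, so that the entire cohomotopy group reduces to $\pi_{n+1}(S^n) \cong \ZZ_2$.

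First I would note that a spin manifold is automatically orientable, so $M$ is a closed oriented $(n+1)$-manifold. Since $M$ is simply connected, $H_1(M;\ZZ) = 0$. Poincaré duality then yields
\[
  H^n(M;\ZZ) \cong H_1(M;\ZZ) = 0.
\]
Plugging this into Theorem \ref{T:cohomotopy computation}(b) gives
\[
  \pi^n(M) \cong H^n(M;\ZZ) \oplus \pi_{n+1}(S^n) \cong \pi_{n+1}(S^n) \cong \ZZ_2,
\]
so there are exactly two homotopy classes $M \to S^n$; one of them is the zero element, represented by a constant map.

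To identify the non-trivial class with $\eta \circ \omega$, I would invoke Theorem \ref{T:cohomotopy computation}(a): the kernel of $\Psi$ is generated by $\eta \circ \omega$ for any map $\omega \colon M \to S^{n+1}$ of odd degree. Because $\Psi$ is trivial here, every class lies in $\ker \Psi$, so the unique non-trivial class is represented by such a composition. To guarantee that an odd-degree $\omega$ exists, I would use the standard construction: pick an embedded open ball $U \subset M$ and define $\omega$ by collapsing $M \setminus U$ to a point; this gives a map of degree $\pm 1$, hence of odd degree.

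There is no real obstacle; the content of the corollary is essentially a bookkeeping consequence of the main theorem together with Poincaré duality. The only mild point worth emphasising is the verification that an odd-degree map $M \to S^{n+1}$ exists without any further hypothesis on $M$, which is handled by the collapse construction above.
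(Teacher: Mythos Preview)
Your argument is correct and matches the paper's implicit reasoning: the corollary is stated without proof, as it follows immediately from Theorem~\ref{T:cohomotopy computation} once one observes $H^n(M;\ZZ)\cong H_1(M;\ZZ)=0$ by Poincar\'e duality. Your remark on the existence of an odd-degree map $\omega$ via a collapse construction is exactly the standard justification the paper leaves unstated.
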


Finally  we would like to show, that $\kappa$ is natural with respect to maps between manifolds which
preserve the spin structure
\begin{prop}\label{P:naturality}
Suppose $\Phi \colon M_1 \to M_2$ is a map between two closed and connected spin manifolds of dimension $(n+1)$.
We assume that the spin structure of $M_1$ coincides with the pull-back spin structure by $\Phi$ of
  $M_2$. Then for the natural homomorphism $\Phi^\# \colon \pi^n(M_2) \to \pi^n(M_1)$,
  $f\mapsto \Phi \circ f$ we have
  \[
    \kappa\left( \Phi^\#(f)  \right) = \deg_2 \Phi \cdot \kappa(f).
  \]
  where $\deg_2 \Phi$ is the mod $2$ degree of $\Phi$. Therefore using the isomorphism
\[
  \pi^n(M)
  \cong H^n(M;\ZZ) \oplus\pi_{n+1}(S^n)
\]
  we have
  \[
    \Phi^\# \colon \pi^n(M_2) \to \pi^n(M_1),\quad
    (\alpha,\nu)\mapsto (\Phi^\ast(\alpha),\deg_2 \Phi \cdot \nu)
  \]
\end{prop}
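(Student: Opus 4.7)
The plan is to compute $\kappa(\Phi^\# f)$ via Pontryagin--Thom. Choose a smooth representative $f \colon M_2 \to S^n$, and after a small perturbation of $\Phi$, assume that $\Phi$ is transverse to $C := f^{-1}(x_0)$ for some regular value $x_0 \in S^n$ of $f$. Then $x_0$ is also regular for $f \circ \Phi$, with Pontryagin manifold $\tilde C := \Phi^{-1}(C)$ in $M_1$ and normal framing $\tilde\varphi$ obtained by pulling back $\varphi$ via $d\Phi$ on the normal bundle. Thus $\kappa(\Phi^\# f) = \kappa([\tilde C,\tilde\varphi])$ and $\kappa(f) = \kappa([C,\varphi])$.

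Decompose into connected components $C = \sqcup_i S_i$ and $\tilde C = \sqcup_{i,j} \tilde S_{i,j}$, where $\tilde S_{i,j}$ is a component of $\Phi^{-1}(S_i)$. Each $\Phi|_{\tilde S_{i,j}} \colon \tilde S_{i,j} \to S_i$ is a covering map of some degree $d_{i,j} \in \ZZ$; picking a regular value $p \in S_i$ of $\Phi$ and counting signed preimages shows $\sum_j d_{i,j} = \deg \Phi$ for every $i$, so in particular $\sum_j d_{i,j} \equiv \deg_2 \Phi \pmod 2$. By Remark \ref{R:cricles generated bordism group},
\[
  \kappa(\Phi^\# f) = \sum_{i,j} \operatorname{ind}(\tilde S_{i,j},\tilde\varphi|_{\tilde S_{i,j}}).
\]

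The main step, and also the main obstacle, is the identity
\[
  \operatorname{ind}(\tilde S_{i,j},\tilde\varphi|_{\tilde S_{i,j}}) \equiv d_{i,j}\cdot \operatorname{ind}(S_i,\varphi|_{S_i}) \pmod 2.
\]
To see this, I would apply Lemma \ref{L:stable class as matrix}: $\operatorname{ind}(S_i,\varphi|_{S_i})$ is the class $\overline{[A_i]} \in \pi_1(\mathrm{GL}^+(n+1))$ of the comparison map $A_i \colon S_i \to \mathrm{GL}^+(n+1)$ between a trivialization of $TM_2|_{S_i}$ from the spin structure (Lemma \ref{L:Framing on C}) and the trivialization $TS_i \oplus \nu(S_i)$ coming from a tangential trivialization of $S_i$ together with $\varphi|_{S_i}$. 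Because the spin structure on $M_1$ is pulled back from that on $M_2$, the induced framing on $TM_1|_{\tilde S_{i,j}} = (\Phi|_{\tilde S_{i,j}})^\ast TM_2|_{S_i}$ agrees (by uniqueness in Lemma \ref{L:Framing on C}) with the pullback framing; likewise $\tilde\varphi|_{\tilde S_{i,j}}$ is the pullback of $\varphi|_{S_i}$, and a tangential trivialization of $\tilde S_{i,j}$ covers one of $S_i$. Hence the comparison matrix for $\tilde S_{i,j}$ is $A_i \circ \Phi|_{\tilde S_{i,j}}$, and since precomposition with a degree $d_{i,j}$ self-map of $S^1$ multiplies the class in $\pi_1(\mathrm{GL}^+(n+1)) \cong \ZZ_2$ by $d_{i,j}$, the identity follows.

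Combining everything,
\[
  \kappa(\Phi^\# f) \equiv \sum_i \Bigl(\sum_j d_{i,j}\Bigr)\operatorname{ind}(S_i,\varphi|_{S_i}) \equiv \deg_2\Phi \cdot \kappa(f) \pmod 2,
\]
which is the claimed formula. The second part of the proposition is immediate: under the splitting of Theorem \ref{T:cohomotopy computation} the $H^n$-component of $\Phi^\# f$ is $(f\circ\Phi)^\ast\sigma = \Phi^\ast(f^\ast\sigma)$ by naturality of cohomology, while the $\pi_{n+1}(S^n)$-component is $\kappa(\Phi^\# f) = \deg_2\Phi \cdot \kappa(f)$ by what we just established.
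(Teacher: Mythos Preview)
Your overall strategy --- make $\Phi$ transverse to the Pontryagin manifold $C$ of $f$, pull back the normal framing, and sum indices of the resulting framed circles --- is close in spirit to the paper's, but the paper does not attempt the componentwise identity
\[
\operatorname{ind}(\tilde S_{i,j},\tilde\varphi)\equiv d_{i,j}\cdot\operatorname{ind}(S_i,\varphi)\pmod 2
\]
in this generality. Instead it writes $f=f_\alpha+f_\nu$ with $\kappa(f_\alpha)=0$ and $f_\nu^\ast(\sigma)=0$, chooses a representative of $f_\alpha$ whose Pontryagin circles \emph{all} have index $0$ (so one only needs ``index $0$ pulls back to index $0$''), and for $f_\nu$ takes a single contractible circle so that, exactly as in the proof of Theorem~\ref{T:cohomotopy computation}, counting preimage components yields $\deg_2\Phi$.

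Your justification of the key identity has a genuine gap. First, the equality $TM_1|_{\tilde S_{i,j}}=(\Phi|_{\tilde S_{i,j}})^\ast TM_2|_{S_i}$ is not true in general: these are different bundles unless $d\Phi$ is invertible along $\tilde S_{i,j}$, so there is no canonical ``pullback framing'' on $TM_1|_{\tilde S_{i,j}}$; likewise transversality to $S_i$ does not make $\Phi|_{\tilde S_{i,j}}$ an immersion, so ``a tangential trivialization of $\tilde S_{i,j}$ covers one of $S_i$'' is undefined at critical points. Second, and more decisively: even granting $[\tilde A_{i,j}]=d_{i,j}[A_i]$ in $\pi_1(\mathrm{GL}^+(n+1))\cong\ZZ_2$, Lemma~\ref{L:stable class as matrix} gives $\operatorname{ind}=\overline{[A]}=[A]+1$, and $x\mapsto x+1$ is not $\ZZ_2$-linear. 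From $[\tilde A_{i,j}]=d_{i,j}[A_i]$ you would get $\operatorname{ind}(\tilde S_{i,j})=d_{i,j}[A_i]+1$, whereas $d_{i,j}\operatorname{ind}(S_i)=d_{i,j}[A_i]+d_{i,j}$; these differ by $1+d_{i,j}\pmod 2$, so your inference fails whenever $d_{i,j}$ is even. (That ``$+1$'' is Pontryagin's term $n(C)\bmod 2$, which certainly does not multiply by $d$ under a degree-$d$ map.) Thus either your claim about $\tilde A$ or your final identity needs repair; as written the chain of implications breaks. The paper's choice of special representatives is exactly what circumvents this bookkeeping.
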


\begin{proof}
  First note that $\Phi^\#$ is well-defined on the homotopy class of $\Phi$.
  For $f \in \pi^n(M_2)$ there is a decomposition $f=f_\alpha + f_\nu$ with $\kappa(f_\alpha)=0$,
  $f_\alpha^\ast(\sigma)=\alpha$ and $\kappa(f_\nu)=\nu$ as well as $f_\nu^\ast(\sigma)=0$.

  Let us show first $\Phi^\#(f_\alpha)=f_{\Phi^\ast(\alpha)}$. Clearly we have
  $\Phi^\#(f_\alpha)(\sigma)=\Phi^\ast(\alpha)$ thus it remains to show
  $\kappa(\Phi^\#(f_\alpha))=0$. Let $C_2$ be the preimage of a regular value of $f_\alpha$
  with a normal framing $\varphi_0$ such that $\kappa([C_2,\varphi_0])=0$. Moreover we may choose
  $f_\alpha$ such that each framed circle of $(C_2,\varphi_0)$ has index $0$. Deform $\Phi$ to be
  transversal to $C_2$, thus $C_1:=\Phi^{-1}(C_2)$ is a closed $1$-dimensional submanifold of
  $M_1$. The normal bundle to $C_1$ is isomorphic to the pull back of the normal bundle of
  $C_2$ by $\Phi$. This induces a framing on $C_2$ such that every framed circle thereof has index
  $0$ (note that the spin structure of $M_1$ is the pulled back by $\Phi$ from $M_2$) which is also
  the framing induced by the map $f_\alpha\circ \Phi$. But this means
  $\kappa(\Phi^\#(f_\alpha))=0$.

  On the other hand we may assume a preimage of a regular point in $S^n$ under $f_\nu$
  is a contractible circle $S_2$ in $M_2$ with normal framing $\varphi$ such that the index
  of the framed circle $(S_2,\varphi)$ is $\nu \in \pi_{n+1}(S^n)$.  Then making again
  $\Phi$ transverse to $S_2$ we obtain a normally framed submanifold $(C_1,\varphi)$ such
  that the index of each framed circle in $C_1$ has index $\nu$. As in the proof of Theorem
  \ref{T:cohomotopy computation} the degree of $(C_1,\varphi)$ is just $\deg_2\Phi \cdot \nu$.
  Therefore $\Phi^\#(f_\nu) = f_{\deg_2\Phi \cdot \nu}$ and the proposition follows.
\end{proof}

\begin{cor}\label{C:counting formula}
    Let $f \colon M \to S^n$ and $x_0 \in S^n$ a regular value. Write $S_1 \cup \ldots \cup S_k
    = f^{-1}(x_0)$ such that $S_i$ is a connected component of $f^{-1}(x_0)$ and denote
    by $\varphi_i$ the induced framing from $f$. Then the number
  \[
    \# \{ i : \kappa([S_i,\varphi_i])\neq 0 \} \mod 2
  \]
  does not depend on $x_0$ and is a homotopy invariant.
\end{cor}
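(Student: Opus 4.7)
The plan is to recognise the asserted count as nothing more than an unpacking of the homomorphism $\kappa \colon \pi^n(M) \to \pi_1^S$ already constructed, so the corollary will follow essentially by chasing definitions and using that $\Omega_1^{\textrm{fr}}\cong\ZZ_2$.

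First I would recall that the Pontryagin–Thom construction from Section \ref{S:Preliminaries} produces, from the homotopy class $[f]\in\pi^n(M)$ and a regular value $x_0\in S^n$, a well-defined bordism class $[f^{-1}(x_0),\varphi_{x_0}]\in\Omega_1^{\textrm{fr}}(M)$ that depends neither on $x_0$ nor on the representative $f$ of its homotopy class. Applying the degree map $\kappa\colon\Omega_1^{\textrm{fr}}(M)\to\Omega_1^{\textrm{fr}}$ of Definition \ref{D:invariant} therefore yields a number
\[
\kappa([f]) \;=\; \kappa\bigl([f^{-1}(x_0),\varphi_{x_0}]\bigr) \;\in\; \Omega_1^{\textrm{fr}}
\]
which is a homotopy invariant and independent of $x_0$.

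Next I would decompose $f^{-1}(x_0) = S_1\cup\ldots\cup S_k$ into its connected components, with induced normal framings $\varphi_i = \varphi_{x_0}|_{S_i}$. By the additivity expressed in Remark \ref{R:cricles generated bordism group} and Definition \ref{D:invariant},
\[
\kappa([f]) \;=\; \sum_{i=1}^k \operatorname{ind}(S_i,\varphi_i)
\;=\; \sum_{i=1}^k \kappa([S_i,\varphi_i]),
\]
where each summand lives in $\Omega_1^{\textrm{fr}}\cong\ZZ_2$. In the group $\ZZ_2$ a sum of $0$'s and $1$'s is exactly the parity of the number of $1$'s, so
\[
\kappa([f]) \;\equiv\; \#\{\,i : \kappa([S_i,\varphi_i])\neq 0\,\} \pmod{2}.
\]

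Combining the two displays, the quantity on the right-hand side of the statement agrees with $\kappa([f])$, which is already known to be well-defined on the homotopy class of $f$ and independent of the regular value $x_0$. That is the entire argument; there is no real obstacle, since the non-trivial content (well-definedness of $\kappa$ on $\Omega_1^{\textrm{fr}}(M)$, independence of the Pontryagin manifold from $x_0$, and the identification $\Omega_1^{\textrm{fr}}\cong\ZZ_2$) has been established in Section \ref{S:Index of framed circles} and the preliminaries. The only point worth emphasising in the write-up is the conversion from the additive formula in $\ZZ_2$ to the parity count, which is what gives the corollary its flavour of a mod $2$ Hopf-type theorem.
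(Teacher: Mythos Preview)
Your argument is correct and is exactly the intended one: the paper states this corollary without proof because it is an immediate unpacking of Definition \ref{D:invariant} together with the Pontryagin--Thom isomorphism and the identification $\Omega_1^{\textrm{fr}}\cong\ZZ_2$. Your write-up makes all of this explicit and there is nothing to add.
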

\section{Application to vector bundles}\label{S:Application to vector bundles}
In this section $\pi \colon E \to M$ should denote an oriented vector bundle of rank $n$ endowed with a spin
structure. Let $s \colon M \to M$ be a section. If not otherwise stated, we say $s$ is
\emph{transversal} if $s$ is transversal to the zero section $0_E$ of $E$. For a transversal
section $s$ the zero locus $C$ is a smooth $1$-dimensional closed submanifold of $M$. The
differential $ds \colon TM \to TE$ restricted to $\nu(C)$ is an isomorphism of the vector bundles
$\nu(C) \to E|_C$. Since $E$ possess a spin structure, by Lemma \ref{L:Framing on C} $E|_C$
has a framing and with $ds$ this endows $\nu(C)$ with the framing $\varphi$ of $E|_C$. Note that
the homology class $[C] \in H_1(M;\ZZ)$ is the Poincar\'{e} dual of the Euler class of $E$.

\begin{prop}\label{P:framed divisor does not depend on the section}
  The class $[C,\varphi] \in \Omega_1^{\textrm{fr}}(M)$ does not depend on the section $s$.
\end{prop}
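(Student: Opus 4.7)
My plan is to construct an explicit framed bordism between the zero loci of any two transverse sections. Let $s_0,s_1$ be transverse sections with zero loci $(C_0,\varphi_0)$ and $(C_1,\varphi_1)$. Denote by $p\colon M\times[0,1]\to M$ the projection and pull $E$ back to $\widetilde{E}:=p^{\ast}E$, which inherits a spin structure from $E$. Form the straight-line homotopy $\widetilde{s}(x,t):=(1-t)\,s_0(x)+t\,s_1(x)$, a section of $\widetilde{E}$ that restricts on $M\times\{i\}$ to $p^{\ast}s_i$. After a compactly supported perturbation rel.\ $M\times\{0,1\}$ that is transverse to the zero section, $\Sigma:=\widetilde{s}^{-1}(0_{\widetilde{E}})$ is a properly embedded compact $2$-submanifold of $M\times[0,1]$ with $\Sigma\cap(M\times\{i\})=C_i$, and the differential $d\widetilde{s}$ yields an isomorphism $\nu(\Sigma)\xrightarrow{\cong}\widetilde{E}|_{\Sigma}$.

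It therefore suffices to exhibit a canonical framing of $\widetilde{E}|_{\Sigma}$ induced by its inherited spin structure, which restricts on $C_i$ to the framing of $E|_{C_i}$ used to build $\varphi_i$. This is the $2$-dimensional analogue of Lemma \ref{L:Framing on C}, and the argument of that lemma carries over: for $n\geq 3$ the group $\mathbf{Spin}(n)$ is $2$-connected, so the obstructions to a global section of the principal $\mathbf{Spin}(n)$-bundle $F'(\widetilde{E}|_{\Sigma})$ vanish on the $2$-dimensional $\Sigma$, and any two global sections differ by a null-homotopic map $\Sigma\to\mathbf{Spin}(n)$. Projecting these sections to $F(\widetilde{E}|_{\Sigma})$ gives a framing $\widetilde{\varphi}$ well-defined up to homotopy, and naturality of the construction under restriction guarantees that $\widetilde{\varphi}|_{C_i}$ is the framing induced by the spin structure of $E|_{C_i}$.

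Transporting $\widetilde{\varphi}$ through the isomorphism $d\widetilde{s}$ yields a normal framing of $\Sigma$ which, at the boundary $C_i$, coincides with $\varphi_i$. Thus $(\Sigma,\widetilde{\varphi})$ is a framed bordism in $M\times[0,1]$ between $(C_0,\varphi_0)$ and $(C_1,\varphi_1)$, proving $[C_0,\varphi_0]=[C_1,\varphi_1]$ in $\Omega_1^{\textrm{fr}}(M)$. The main obstacle is precisely the $2$-dimensional extension of Lemma \ref{L:Framing on C}; once the vanishing of $\pi_1(\mathbf{Spin}(n))$ and $\pi_2(\mathbf{Spin}(n))$ is invoked (both hold for $n\geq 3$), the remainder is a routine naturality verification that the pulled-back spin structure and the transported framing behave correctly under restriction to the boundary.
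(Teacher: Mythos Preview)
Your proposal is correct and follows essentially the same route as the paper: take a homotopy of sections on $M\times[0,1]$, perturb rel boundary to be transverse, and frame the normal bundle of the resulting surface $\Sigma$ via the spin structure on the pulled-back bundle. The only notable difference is that you are more careful than the paper at the key step: the paper simply cites Lemma~\ref{L:Framing on C} to frame $\mathrm{pr}^\ast(E)|_\Sigma$, even though that lemma is stated for $S^1$ (and Corollary~\ref{C:1-dim CW complex} only for $1$-dimensional CW complexes), whereas you explicitly justify the $2$-dimensional extension by invoking $\pi_1(\mathbf{Spin}(n))=\pi_2(\mathbf{Spin}(n))=0$ for $n\geq 3$.
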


\begin{proof}
Let $s' \colon M \to E$ be another transversal section and denote the corresponding normally framed
  zero locus by $(C',\varphi')$. Let $s^\ast \colon M \times I \to \textrm{pr}^\ast(E)$
  be a section of $\textrm{pr}^\ast(E) \to M \times I$ (where $\textrm{pr} \colon M \times I \to M$) such that
  $s^\ast|_{M \times 0} = s$ and $s^\ast|_{M \times 1} = s'$. There we may deform $s^\ast$
  to a section $\hat s$ which is transverse to the zero section of $\textrm{pr}^\ast(E) \to M\times
  I$ and agrees with $s$ and $s'$ on the boundary of $M \times I$. The zero locus of $\hat s$, call
  it $\Sigma \subset M \times I$ is a bordism between $C$ and $C'$ by construction. Moreover by
  Lemma \ref{L:Framing on C} $T(M\times I)|_\Sigma$ inherits a framing from the spin structure of
  $M$ as well as $\nu(\Sigma)$
  from $d\hat s$ and the spin structure of $\textrm{pr}^\ast(E)|_\Sigma$. Thus $\Sigma$ is
  a normally framed bordism between $(C,\varphi)$ and $(C',\varphi')$.
\end{proof}
%
\begin{dfn}\label{D:framed divisor}
 The bordism class $[C,\varphi] \in \Omega_1^{\textrm{fr}}(M)$ constructed above is called the
  \emph{framed divisor of $E\to M$}. Furthermore we define the \emph{degree $\kappa(E)$ of $E$}  as
  $\kappa([C,\varphi])$

\end{dfn}

For $[C,\varphi]\in \Omega_1^{\textrm{fr}}(M)$ we denoted by $w(C) \in H^{n}(M;\ZZ_2)$ the
Poincar\'{e} dual of the $\ZZ_2$ fundamental class $[C] \in H_1(M;\ZZ_2)$. If $[C,\varphi]$ is the
framed divisor of $E \to M$ then $w(C)$ is the $n$-th Stiefel-Whitney class $w_{n}(E)$
(since $w_{n}(E)$ is the Euler class $e(E)$ modulo $2$). Therefore if $w_{n}(E)=0$ then
the degree of $E$ does not depend on the spin structure (see Lemma \ref{L:Framing on
C} and Proposition \ref{P: dependence on spin structure}).

\begin{prop}\label{P: framed divisor does not depend on the spin structures}
  If $w_n(E)=0$ then the framed divisor is independent of the spin structures on $M$ and $E$.
\end{prop}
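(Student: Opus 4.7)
The plan is to split the assertion into two parts, because the spin structures on $M$ and on $E$ enter the construction of the framed divisor differently. The spin structure of $M$ is \emph{never} used in the construction of the framed divisor $[C,\varphi] \in \Omega_1^{\textrm{fr}}(M)$ itself; only the spin structure of $E$ is, via the framing of $E|_C$ supplied by Lemma \ref{L:Framing on C} and transported by $ds$. The spin structure of $M$ enters only when one subsequently passes to the degree $\kappa(E)=\kappa([C,\varphi])$. For this piece, independence is immediate from Proposition \ref{P: dependence on spin structure}: the Poincar\'{e} dual $w(C) \in H^n(M;\ZZ_2)$ of $[C]$ is the mod-$2$ Euler class $w_n(E)$, which vanishes by hypothesis, so $\kappa$ and $\kappa^\alpha$ agree on $[C,\varphi]$ for every $\alpha \in H^1(M;\ZZ_2)$.

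For independence from the spin structure on $E$, I would fix any spin structure on $M$ and invoke Theorem \ref{T:BordismGroupSplits}, which says that $[C,\varphi] \in \Omega_1^{\textrm{fr}}(M)$ is determined by the pair $(\Phi([C,\varphi]), \kappa([C,\varphi]))=([C],\kappa([C,\varphi]))$. Let $\sigma,\sigma'$ be two spin structures on $E$ related by $\alpha \in H^1(M;\ZZ_2)$, and write $\varphi,\varphi'$ for the corresponding framings on $\nu(C)$. The submanifold $C$ is the same in both cases, so $\Phi([C,\varphi]) = \Phi([C,\varphi']) = [C] \in H_1(M;\ZZ)$. What remains is to show $\kappa([C,\varphi])=\kappa([C,\varphi'])$.

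The crux is an analog of Proposition \ref{P: dependence on spin structure} with the spin structure varying on $E$ instead of on $TM$. Decompose $C = S_1\cup\cdots\cup S_k$ into connected components with inclusions $i_j\colon S_j\hookrightarrow M$. On each $S_j$, the two spin structures on $E|_{S_j}$ differ by $i_j^\ast(\alpha) \in H^1(S_j;\ZZ_2)$, and I claim the induced framings on $E|_{S_j}$ (hence on $\nu(S_j)$ via $ds$) either agree (when $i_j^\ast(\alpha)=0$) or differ by the nontrivial element of $\pi_1(\SOg(n))$ (when $i_j^\ast(\alpha)\neq 0$); this is the content of Lemma \ref{L:Framing on C} applied to the two double covers of the frame bundle of $E|_{S_j}$, exactly as in the proof of Proposition \ref{P: dependence on spin structure}. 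By Lemma \ref{L:stable class as matrix} such a twist contributes the non-trivial element of $\Omega_1^{\textrm{fr}}\cong\ZZ_2$ to the index of the corresponding framed circle. Summing over components then yields
\[
  \kappa([C,\varphi]) - \kappa([C,\varphi']) \;=\; \sum_{j=1}^{k} \langle \alpha, i_{j\ast}[S_j]\rangle \;=\; \langle \alpha, [C]\rangle_{\ZZ_2},
\]
and since $[C]$ is Poincar\'{e}-dual to $w_n(E)=0$ in $H^n(M;\ZZ_2)$, this pairing vanishes for every $\alpha$. Combined with the equality $\Phi([C,\varphi])=\Phi([C,\varphi'])$ and Theorem \ref{T:BordismGroupSplits}, this gives $[C,\varphi]=[C,\varphi']$ in $\Omega_1^{\textrm{fr}}(M)$.

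The step I expect to be the main obstacle is the per-component comparison: verifying that changing the spin structure of $E$ on a single circle $S_j$ by the nonzero element of $H^1(S_j;\ZZ_2)$ twists the framing from Lemma \ref{L:Framing on C} by the nontrivial element of $\pi_1(\SOg(n))$. This is not really new — it is precisely the argument already carried out inside Proposition \ref{P: dependence on spin structure}, applied to the bundle $E$ in place of $TM$, since Lemma \ref{L:Framing on C} depends only on having a spinnable bundle of rank $\geq 3$ over a circle and not on its being a tangent bundle — but it is the point where one must be careful that the formalism transfers verbatim.
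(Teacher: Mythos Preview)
Your argument is correct and, in fact, considerably more detailed than what the paper provides. The paper gives no proof environment for this proposition at all: the only justification is the paragraph immediately preceding it, which observes that $w(C)=w_n(E)$ and hence that $\kappa(E)$ is independent of the spin structure on $M$ by Proposition~\ref{P: dependence on spin structure}. That paragraph addresses only the \emph{degree}, not the framed divisor itself, and it says nothing about varying the spin structure on $E$.

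Your proof fills exactly this gap. You correctly note that the spin structure of $M$ does not enter the construction of $[C,\varphi]\in\Omega_1^{\textrm{fr}}(M)$ at all (only that of $E$ does), so independence from $M$'s spin structure is vacuous at the level of the framed divisor. For $E$, your reduction via Theorem~\ref{T:BordismGroupSplits} to the pair $([C],\kappa([C,\varphi]))$ and your per-component computation are precisely the transplant of the proof of Proposition~\ref{P: dependence on spin structure} from $TM$ to $E$; this is presumably what the paper's parenthetical reference to Lemma~\ref{L:Framing on C} and Proposition~\ref{P: dependence on spin structure} is meant to evoke, but you have actually carried it out. The key identity $\sum_j \langle i_j^\ast(\alpha),[S_j]\rangle = \langle \alpha,[C]\rangle$ and the identification of $[C]$ as Poincar\'e dual to $w_n(E)$ are exactly right.

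One minor remark: your use of Theorem~\ref{T:BordismGroupSplits} requires fixing a spin structure on $M$ to obtain the splitting, but since you are only using the splitting to detect equality of two elements of $\Omega_1^{\textrm{fr}}(M)$, the conclusion $[C,\varphi]=[C,\varphi']$ is independent of that auxiliary choice, as you implicitly assume.
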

For the next theorem we will need a technical Lemma.  Let $D^{m}$ denote the closed unit ball in
$\RR^m$ and consider a smooth map $f \colon D^{n+k+1} \to \RR^{n+1}$. Assume that $0 \in\RR^{n+1}$
is a regular value for $f$ and $\Sigma_f^k := f^{-1}(0)$ does not intersect the boundary of
$D^{n+k+1}$. Denote by $\varphi_f$ the induced framing on $\nu(\Sigma_f^k)$. Since $\Sigma_f^k$ is
a submanifold of $\RR^{n+k+1}$ the trivialization $\varphi_f$ defines a stable tangential framing of
$\Sigma_f^k$ thus the pair $(\Sigma_f^k,\varphi_f)$ defines an element  in
$\Omega_k^{\textrm{fr}}$. On the other side, consider
\[
  g \colon S^{n+k}= \partial D^{n+k+1} \to S^n,\quad g(x):=\frac{f(x)}{|f(x)|}
\]
and choose a regular value $y \in S^{n}$. Denote by $(\Sigma_g^k,\varphi_g)$ the induced
stably framed manifold.
\begin{lem}\label{L:local index section}
  With the notation above we have that $(\Sigma_f^k,\varphi_f)$ and $(\Sigma_g^k,\varphi_g)$ are
  stably framed bordant, thus they define the same element in $\Omega_k^{\textrm{fr}}$.
\end{lem}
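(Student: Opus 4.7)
The plan is to build an explicit normally framed bordism, sitting inside $D^{n+k+1}$, whose two boundary components recover $(\Sigma_f^k,\varphi_f)$ and $(\Sigma_g^k,\varphi_g)$ up to the passage to stable framings. The whole setup then embeds into $S^{n+k+1}$ (sending $D^{n+k+1}$ to a hemisphere, $S^{n+k}$ to the equator), which realises both classes in a common $\Omega_k^{\textrm{fr}}(S^{n+k+1})$ and hence in $\Omega_k^{\textrm{fr}}$.

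Concretely, I extend $g$ to the interior by setting $\bar g\colon D^{n+k+1}\setminus \Sigma_f^k\to S^n$, $\bar g(x):=f(x)/|f(x)|$, which is well-defined because $f$ vanishes only on $\Sigma_f^k$ and which restricts to $g$ on $\partial D^{n+k+1}$. By Sard's theorem I may assume (perturbing $y$ within its framed bordism class if necessary) that $y$ is a regular value of $\bar g$; near $\Sigma_f^k$ this is automatic since $f$ is a submersion onto a neighbourhood of $0$. Choose a tubular neighbourhood $T_\epsilon\cong \Sigma_f^k\times D_\epsilon^{n+1}$ of $\Sigma_f^k$ whose trivialisation is the one given by the framing $\varphi_f$, so that $f|_{T_\epsilon}$ becomes the projection $(p,v)\mapsto v$. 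Then $\bar g^{-1}(y)\cap T_\epsilon=\Sigma_f^k\times (0,\epsilon]\!\cdot\!y$, a smooth collar on $\Sigma_f^k\times\{\epsilon y\}$.

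I now define $W:=\bar g^{-1}(y)\setminus \mathrm{int}(T_\epsilon)$. This is a compact smooth $(k+1)$-manifold whose boundary has two components: $\Sigma_g^k\subset S^{n+k}=\partial D^{n+k+1}$ and a copy $\Sigma_f'\cong \Sigma_f^k$ on $\partial T_\epsilon$. Its normal bundle $\nu(W\subset D^{n+k+1})\cong \bar g^{\ast}(T_yS^n)$ is trivial of rank $n$, and picking a basis $v_1,\dots,v_n$ of $T_yS^n$ gives a framing. Thus $W$ is a normally framed submanifold of $\RR^{n+k+1}$, and in particular, after adding an extra trivial factor at the $\Sigma_g^k$ end (the outward normal of $S^{n+k}$ in $D^{n+k+1}$), a stably framed bordism between the abstract stably framed manifolds associated with $\Sigma_f^k$ and $\Sigma_g^k$.

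The main obstacle is checking that the induced framings on the two boundary components really reproduce $\varphi_f$ and $\varphi_g$ up to homotopy. On the $\Sigma_g^k$ side this is immediate: $\bar g$ restricts to $g$ on $S^{n+k}$, so the normal framing of $W$ restricts to $\varphi_g$, and the discrepancy between $\nu(\Sigma_g^k\subset S^{n+k})$ and $\nu(\Sigma_g^k\subset D^{n+k+1})$ is exactly the outward normal line, which is absorbed when passing to the stable tangential framing. On the $\Sigma_f'$ side the induced framing of $\nu(\Sigma_f'\subset D^{n+k+1})$ is the concatenation of $v_1,\dots,v_n$ (tangent to $T_yS^n$) with the outward normal $-y$ of $\Sigma_f'$ in $W$; this is an orthonormal basis of $\RR^{n+1}$ that differs from the standard one $e_1,\dots,e_{n+1}$ by a map into $\mathrm{O}(n+1)$. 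By choosing the basis $(v_1,\dots,v_n)$ appropriately (swapping two vectors if necessary), this map lands in $\mathrm{SO}(n+1)$, and since it is a constant map $\Sigma_f'\to\mathrm{SO}(n+1)$ it is null-homotopic after one stabilisation, so the framing is stably homotopic to $\varphi_f$. This identifies $(\Sigma_f^k,\varphi_f)$ and $(\Sigma_g^k,\varphi_g)$ in $\Omega_k^{\textrm{fr}}$.
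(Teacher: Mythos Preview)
Your proof is correct and follows essentially the same approach as the paper: both arguments excise a tubular neighbourhood of $\Sigma_f^k$ and use the map $x\mapsto f(x)/|f(x)|$ on the complement to produce a normally framed $(k+1)$-manifold in $D^{n+k+1}$ interpolating between $\Sigma_g^k$ on the outer sphere and a parallel copy of $\Sigma_f^k$ on the boundary of the tube. The paper phrases this as a map $F$ on the cobordism $N=D^{n+k+1}\setminus \operatorname{int}D(\Sigma_f^k)$ and invokes the Pontryagin construction, while you take the preimage $W=\bar g^{-1}(y)\setminus\operatorname{int}(T_\epsilon)$ directly; these are the same submanifold. Your treatment is in fact more explicit about why the induced framing on $\Sigma_f'$ agrees stably with $\varphi_f$ (the constant-matrix argument), a point the paper leaves implicit; the remark that one needs a stabilisation there is harmless but unnecessary, since a constant map into $\mathrm{SO}(n+1)$ is already null-homotopic.
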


\begin{proof}
There is an $\varepsilon >0$ such that the closed ball $D_\varepsilon$ centered in $0 \in
  \RR^{n+1}$ with radius $\varepsilon$ contains only regular values of $f$. The preimage
  of $D_\varepsilon$ under $f$ is a disc bundle $D(\Sigma_f^k)$ of the normal bundle
  $\nu(\Sigma_f^k \hookrightarrow \RR^{n+k+1})$. Denote by $S(\Sigma_f^k)$ its sphere bundle.
  Then $f|_{S(\Sigma_f^k)}$ has image $S_\varepsilon  = \partial D_\varepsilon$. Thus for $y' \in
  S_\varepsilon$, $\Sigma_{y'} = \left( f|_{S(\Sigma_f^k)}  \right)^{-1}(y')$ lies completely in $S(\Sigma_f^k)$. Moreover the Pontryagin
  manifold $(\Sigma_{y'},\varphi_{y'})$ is framed bordant to $(\Sigma_f,\varphi_f)$. Thus we would
  like to show that $(\Sigma_{y'},\varphi_{y'})$ represents the same element in $\Omega_{k}^{\textrm{fr}}$
  as $(\Sigma_g,\varphi_g)$. Since the normal bundle of $S(\Sigma_f^k)$ is trivial the framing
  $\varphi_{y'}$ induces a framing $\varphi_{y'}'$ on $\nu(\Sigma_{y'}\hookrightarrow S(\Sigma_{f}^k))$ such that
  $(\Sigma_{y'},\varphi_{y'})$ is stably framed bordant to $(\Sigma_{y'},\varphi_{y'}')$. But the
  latter normally framed manifold is the Pontryagin manifold to the map $f|_{S(\Sigma_f^k)}
  \colon S(\Sigma_f^k) \to S_\varepsilon$ at the point $y ' \in S_\varepsilon$.

  Let $N$ be the complement of the interior of $D(\Sigma_f^k)$ in $D^{n+k+1}$.
  Then $N$ is a framed cobordism between $S^{n+k}=\partial D^{n+k+1}$ and $S(\Sigma_f^k)$.
  The restriction of the map
  \[
    F \colon N \to S^n,\quad F(x):=\frac{f(x)}{|f(x)|}
  \]
  to $S^{n+k}$ is equal to $g$ and $F$ restricted to $S(\Sigma^k)$ is equal to
  $\varepsilon^{-1}\hat f$. Hence $F$ defines a framed bordism between $(\Sigma_g^k,\varphi_g)$
  and $(\Sigma_{y'},\varphi'_{y'})$ which proves the lemma.
\end{proof}
\begin{thm}\label{T:now where vanishing section}
  Let $E \to M$ be an oriented vector bundle of rank $n$ with $w_2(E)=0$ over a closed spin manifold $M$ of dimension
  $n+1$. Then $E$ admits a nowhere vanishing section if and only if the Euler class is zero and
  $\kappa(E)=0$.
\end{thm}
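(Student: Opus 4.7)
The forward implication is immediate: if $s_0 \colon M \to E$ is nowhere vanishing, then $s_0$ is trivially transverse to the zero section with empty zero locus, so by Proposition \ref{P:framed divisor does not depend on the section} the framed divisor of $E$ is $0 \in \Omega_1^{\textrm{fr}}(M)$. Since $\Phi([C,\varphi]) = [C] = 0 \in H_1(M;\ZZ)$ is Poincar\'e dual to $e(E)$, we get $e(E) = 0$, and $\kappa(E) = \kappa(0) = 0$.

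\textbf{Reduction and the near-$\Sigma$ construction.} For the converse, pick any transverse section $s$ of $E$ with zero locus $(C,\varphi)$. Poincar\'e duality and $e(E) = 0$ give $\Phi([C,\varphi]) = [C] = 0$, so $[C,\varphi] \in \ker\Phi$; combined with $\kappa([C,\varphi]) = \kappa(E) = 0$ and Lemma \ref{L:kernerl isomorphic to omega1fr} (which states that $\kappa|_{\ker\Phi}$ is an isomorphism), this forces $[C,\varphi] = 0$ in $\Omega_1^{\textrm{fr}}(M)$. Hence $(C,\varphi)$ admits a normally framed null-bordism $(\Sigma,\varphi_\Sigma) \subset M \times I$. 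My plan is then to build a section $\hat s$ of $\textrm{pr}^*(E) \to M \times I$ with $\hat s|_{M \times \{0\}} = s$, transverse to the zero section, whose zero locus is exactly $\Sigma$; then $\hat s|_{M \times \{1\}}$ has empty zero locus (since $\partial \Sigma \subset M \times \{0\}$) and is the desired nowhere vanishing section of $E$. To set up $\hat s$ near $\Sigma$, I would use that $B\mathbf{Spin}(n)$ is $2$-connected for $n \geq 3$: the spin bundle $\textrm{pr}^*(E)|_\Sigma$ over the $2$-complex $\Sigma$ is trivial, and the canonical framing on $C = \partial\Sigma$ supplied by Lemma \ref{L:Framing on C} extends (using $\pi_1(\mathbf{Spin}(n)) = 0$) to a framing $\tau$ of $\textrm{pr}^*(E)|_\Sigma$. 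Combined with $\varphi_\Sigma$, this gives a tubular-neighbourhood model $\hat s(\sigma,v) := v$ on $N(\Sigma)$ that vanishes transversely on $\Sigma$ with the prescribed framing, and agrees with $s$ on $N(\Sigma) \cap (M \times \{0\})$ by the compatibility of the two framings on $C$.

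\textbf{Main obstacle.} The hard step is extending $\hat s$ from $N(\Sigma) \cup (M \times \{0\})$ to all of $M \times I$ as a section that remains nowhere zero off $\Sigma$. I would carry this out by obstruction theory for the $S^{n-1}$-fibration $S(\textrm{pr}^*(E)) \to M \times I$, whose fibre is $(n-2)$-connected: the first non-trivial obstruction lies in $H^n(\cdot;\ZZ)$ (relative to the subspace where $\hat s$ is already defined) and vanishes because $e(E) = 0$; the final one lies in $H^{n+1}(\cdot;\ZZ_2)$ and is precisely the invariant that $\kappa$ computes, which vanishes because $\Sigma$ was chosen with $\kappa([C,\varphi]) = 0$. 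Lemma \ref{L:local index section} supplies the local model confirming that the section built near $\Sigma$ carries the framing data used to define $\kappa$, so this identification of the secondary obstruction with $\kappa(E)$ is valid. Once the extension is complete, $\hat s|_{M \times \{1\}}$ is the sought-after nowhere vanishing section.
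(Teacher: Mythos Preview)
Your route differs genuinely from the paper's. The paper runs obstruction theory directly on a CW decomposition of $M$: having used $e(E)=0$ to produce a section with no zeros over the $n$-skeleton, it identifies the remaining obstruction over the top cell, an element of $\pi_n(S^{n-1})\cong\ZZ_2$, with $\kappa(E)$ via Lemma~\ref{L:local index section}. You instead combine both hypotheses into the single statement $[C,\varphi]=0\in\Omega_1^{\textrm{fr}}(M)$, choose a framed null-bordism $\Sigma\subset M\times I$, and try to realise $\Sigma$ as the transverse zero locus of a section of $\textrm{pr}^*E$. This is an attractive, more geometric strategy.

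The problem is your last paragraph. Once $\hat s$ is built on $A:=(M\times\{0\})\cup N(\Sigma)$, the obstructions to extending it without new zeros lie in $H^k(M\times I, A;\pi_{k-1}(S^{n-1}))$. You assert that the $H^n$ and $H^{n+1}$ obstructions are $e(E)$ and $\kappa(E)$ respectively, but neither identification is justified: the $H^n$ class here is a \emph{relative} Euler class, and vanishing of the absolute $e(E)$ does not force it to be zero; and the hypotheses $e(E)=0$, $\kappa(E)=0$ were already fully spent in producing $\Sigma$. The actual situation is that $H^k(M\times I,A)\cong\widetilde H^{\,k-1}(\Sigma/C)$ (collapsing $M\times\{0\}$ inside the contractible $M\times I$ leaves $\Sigma/C$), and since $\Sigma/C$ is $2$-dimensional these groups vanish for $k\geq 4$. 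So for $n\geq 4$ every obstruction group is zero for purely dimensional reasons and your construction succeeds, though not for the reasons you state. For $n=3$, however, there remains a genuine primary obstruction in $\widetilde H^{2}(\Sigma/C;\ZZ)$, which need not vanish for an arbitrary null-bordism $\Sigma$; you would need to choose $\Sigma$ more carefully or supply a separate argument. The paper's direct proof on $M$ avoids this issue and works uniformly for all $n\geq 3$.
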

\begin{proof}
Suppose there is a nowhere vanishing section of $E$ then clearly this section is transverse and
  has an empty framed divisor. Thus from Theorem \ref{T:BordismGroupSplits} we have that the Euler
  class must be zero and $\kappa(E)=0$.

  Assume now that $e(E)=0$ and $\kappa(E)=0$. Consider the fibration
  \[
    S^{n-1} \longrightarrow B \textrm{SO}(n-1) \longrightarrow B \textrm{SO}(n).
  \]
  where $B \textrm{SO}(k)$ denotes the classifying space to the special orthogonal group
  $\textrm{SO}(k)$. Consider the classifying map $g \colon M \to B \textrm{SO}(n)$ for $E \to M$.
  There exists a nowhere vanishing section if and only if there is a lift $\hat g\colon M \to
  B \textrm{SO}(n-1)$ of $g$ up to homotopy.

  First we put a CW-structure on $M$ (e.g. induced by a Morse function) then over the $(n-2)$-skeleton
  of $M$ there exists such a lift $\hat g$ of $g$. The obstruction to extend the lift over the
  $n$-skeleton lies in $H^n(M;\pi_{n-1}(S^{n-1}))=H^n(M;\ZZ)$ which is given by the Euler class $e(E)$.
  Since this is assumed to be zero $\hat g$ extends over the $n$-skeleton of $M$. The obstruction
  to extend $\hat g$ over the top cell of $M$ lies in $H^{n+1}(M;\pi_{n}(S^{n-1}))\cong
  \pi_{n}(S^{n-1})\cong \ZZ_2$. Let $e_{n+1}$ be the top cell of $M$ and $\psi \colon \partial
  e_{n+1}\cong S^{n} \to M$ the corresponding attaching map. The bundle $E|_{e_{n+1}}$ is
  canonical isomorphic to $e_{n+1} \times \RR^n$. Let $\sigma \colon M \to E$ be a section
  which has no zeroes over the $n$-skeleton of $M$ and which is transverse to the zero section of $E$.
  Then consider the map
  \[
    g \colon \partial e_{n+1}\cong S^n \to S^{n-1},\quad g(x):=\frac{\sigma\circ\psi(x)}{|\sigma\circ\psi(x)|}
  \]
  (where the norm is take with respect to a euclidean bundle metric on $E$). The homotopy
  class of $g$ in $\pi_{n}(S^{n-1})$ is the obstruction to extend a no where vanishing section over the
  $n$-skeleton to the $(n+1)$-skeleton of $M$. Since $\pi_n(S^{n-1})$ is isomorphic to the stable
  homotopy group $\pi_1^S$ we consider the homotopy class of $g$ as an element therein.

  From Lemma \ref{L:local index section} we infer that the $[g] \in \pi_1^S\cong \Omega_1^{\textrm{fr}}$ is equal to the framed
  divisor $\kappa(E)$ of $E$ defined by $\sigma$, thus $E$ admits a no where vanishing section in case
  $e(E)=0$ and $\kappa(E)=0$.
\end{proof}

\begin{exmpl}\label{E:Spheres}
As an application of our theory we will reprove the following fact due to Whitehead
  \cite{MR0005737} and Eckmann \cite{MR0007645}: The number of linear independent vector fields on $S^{4k+1}$ is equal to $1$
  (see also \cite{MR0133837} and in \cite{MR0215317}).

  Denote by $\langle \cdot , \cdot  \rangle $ the standard euclidean product in $\RR^{4k+2}$. The
  vector field
  \[
    v \colon \RR^{4k+2}\to \RR^{4k+2},\quad v(x_1,x_2,\ldots,x_{4k+2})=(-x_2,x_1,\ldots,-x_{4k+2},x_{4k+1})
  \]
  defines a nowhere vanishing vector field on $S^{4k+1}$ since $\langle v(x), x \rangle =0$ for $x \in S^{4k+1}$. Let
  $E$ the subbundle of $TS^{4k+1}$ orthogonal to the line bundle spanned by $v$. For any vector field
  on $S^{4k+1}$ which is in every point linear independent to $v$ there is a nowhere vanishing
  section of $E$ \footnote{For any pair on orthonormal vector fields $v_1,v_2$ of $S^{4k+1}$ one can choose
  a new pair of orthonormal vector fields which consists of $v$ and an section of $E$.}. Since the
  Euler class of $E$ vanishes, it suffices to show that $\kappa(E)$ is
  not zero by Theorem \ref{T:now where vanishing section} (note that the spin structures of
  $S^{4k+1}$ and that of $E$ are unique up to homotopy).

  Consider now the vector field
  \[
    w \colon \RR^{4k+2} \to \RR^{4k+2},\quad
    w(x) = (0,0,-x_5,x_6,x_3,-x_4,-x_9,x_{10},x_7,-x_8,\ldots)
  \]
  Since $\langle w(x), x \rangle  = \langle w(x), v(x) \rangle=0$ we have that $w$ is a section of
  $E$. Furthermore $w$ is transverse to the zero section of $E$ and the zero locus is given by
  \[
    S=\{(x_1,x_2,0,\ldots,0) \in S^{4k+1} : x_1^2+x_2^2 =1\}.
  \]
  In Example \ref{Ex:First examples} we saw that $TS^{4k+1}|_S$ inherits the standard framing from
  the spin structure. But the induced framing on $E|_S$ cannot be the standard framing. To see
  this assume it inherits the standard framing and let $\tau_1,\ldots,\tau_n$ be a trivialization of
  $E|_S$, then, since the spin structure on $E$ is induced by $TS^{4k+1}$ and $v$, the map $S \to
  \mathrm{SO}(4k+2)$, $x\mapsto (x,v(x),\tau_1(x),\ldots,\tau_n(x))$ has to be nullhomotopic cf.
  Example \ref{Ex:First examples} (note that $v|_S$ is tangent to $S$) which is a contradiction.
  Thus from Example \ref{Ex:First examples} we deduce that the index of the framed divisor is not
  zero, hence $\kappa(E)=1$ and therefore $E$ does not admit a nowhere vanishing section from
  Theorem \ref{T:now where vanishing section}.
\end{exmpl}

\begin{rem}\label{rem: computation of N}
In \cite[Theorem 1.6]{MR1972251} the authors show, that for any $n$-dimensional CW-complex of dimension $X$
and any $k$-dimensional integral cohomology class $a \in H^k(X;\ZZ)$ there exists an oriented
vector bundle over $X$ whose Euler class equals $2 \cdot N(n,k) \cdot a$.

  Suppose $\dim X = 2k+1$. By Steenrod's exact
  sequence \eqref{eq:SES} it follows that the Hurewicz map $\pi^n(X) \to H^n(X;\ZZ)$ is surjective.
  Then for every $a \in H^n(X;\ZZ)$ there is a map $f_a \in \pi^n(X)$ such
  that $f_a^\ast(\sigma)=a$, where $\sigma \in H^n(S;\ZZ)$ denotes the generator such that $2\sigma$
  equals to the Euler class of the tangent bundle $TS^n$ of $S^n$. Clearly the vector bundle
  $f_a^\ast(TS^n)$ has Euler class $2 \cdot a$ and therefore $N(2k,2k+1)=1$ in the notation of \cite{MR1972251}.
\end{rem}

Note that any vector bundle over $S^n$ for $n\neq 2,4,8$ has an Euler class divisible by $2$, cf.
\cite{MR102805, MR126282}.
In the cases $n=2,4,8$ there are real vector bundles whose Euler class is a generator of
$H^n(S^n;\ZZ)$, namely the associated bundles to the Hopf fibrations $S^{2n-1} \to S^n$.
We deduce

\begin{prop}\label{P: vector bundles in special cases}
  Suppose $n=4$ or $n=8$ and let $M$ be a $(n+1)$-dimensional closed spin manifold.
  Denote by $\mathrm{Vect}_n(M)$ the set oriented vector bundles over $M$ of rank $n$ up to
  isomorphism. Let $E_0 \to S^{n}$ denote the oriented rank $n$ vector bundle such that the
  Euler class of $E_0$ is a generator of $H^n(S^n;\ZZ)$. Then the map
  \[
    \pi^n(M) \to \mathrm{Vect}_n(M), \quad f\mapsto f^\ast(E_0)
  \]
  is injective.
\end{prop}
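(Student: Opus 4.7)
The plan is to use the splitting in Theorem \ref{T:cohomotopy computation}, which gives $\pi^n(M) \cong H^n(M;\ZZ)\oplus \pi_{n+1}(S^n)$ via $[f]\mapsto (f^\ast\sigma,\kappa(f))$, and to recover both coordinates from the isomorphism class of $E := f^\ast E_0$. Since Euler classes and the degree $\kappa(E)$ are isomorphism invariants of $E$ (cf.\ Definition \ref{D:framed divisor} and Proposition \ref{P:framed divisor does not depend on the section}), once both coordinates of the splitting are shown to be recoverable from $E$, injectivity is automatic.

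The $H^n$-coordinate is recovered from the Euler class: by naturality $e(E) = f^\ast e(E_0)$, and since $e(E_0)$ is a generator of $H^n(S^n;\ZZ)\cong\ZZ$ we have $e(E_0) = \pm\sigma$ (with a sign depending only on the choice of $E_0$), so $e(E) = \pm f^\ast\sigma$. Hence $f_1^\ast E_0 \cong f_2^\ast E_0$ implies $f_1^\ast\sigma = f_2^\ast\sigma$.

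The main step is to show $\kappa(E) = \kappa(f)$. I would fix a section $s_0$ of $E_0$ that is transverse to the zero section and has a single zero at some point $x_0\in S^n$; this is possible since $|e(E_0)|=1$, and for $n=4,8$ one may take the canonical section of the quaternionic or octonionic Hopf bundle. Arrange $f$ transverse to $x_0$, so that $s := f^\ast s_0$ is a transverse section of $E$ with zero locus $C := f^{-1}(x_0)$. Both $\kappa(f) = \kappa([C,\varphi_f])$ and $\kappa(E) = \kappa([C,\varphi_E])$ are then computed on the same $C$, but with framings of $\nu(C)$ coming from different sources: $\varphi_f$ uses $df \colon \nu(C) \to T_{x_0}S^n$ together with a fixed basis of $T_{x_0}S^n$, while $\varphi_E$ uses $ds \colon \nu(C) \to E|_C$ together with the framing on $E|_C$ coming from the spin structure via Lemma \ref{L:Framing on C}. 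The key observation is that $E|_C = (f|_C)^\ast E_0$ is canonically the trivial bundle $C\times(E_0)_{x_0}$ since $f|_C\equiv x_0$, and its pulled-back spin structure is therefore the \emph{standard} one (it extends over $D^2\times(E_0)_{x_0}$), so the spin framing on $E|_C$ is represented by any constant basis of $(E_0)_{x_0}$. At $p\in C$ the vertical derivative is $ds|_p = ds_0|_{x_0}\circ df|_p$ on $\nu(C)_p$; choosing the basis of $(E_0)_{x_0}$ to be the $ds_0|_{x_0}$-image of the chosen basis of $T_{x_0}S^n$ identifies $\varphi_E$ with $\varphi_f$, whence $\kappa(E)=\kappa(f)$.

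Once this identity is established, injectivity follows immediately: if $f_1^\ast E_0 \cong f_2^\ast E_0$, then $(f_1^\ast\sigma,\kappa(f_1)) = (f_2^\ast\sigma,\kappa(f_2))$ in $H^n(M;\ZZ)\oplus\pi_{n+1}(S^n)$, and Theorem \ref{T:cohomotopy computation} gives $[f_1]=[f_2]$. I expect the main obstacle to be the careful verification that $\varphi_E$ and $\varphi_f$ agree as framings; in particular, identifying the pulled-back spin structure on $C\times(E_0)_{x_0}$ with the standard one is what makes the spin-induced framing a constant one, and hence directly comparable to the Pontryagin--Thom framing coming from $df$.
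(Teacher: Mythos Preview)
Your proposal is correct and follows essentially the same route as the paper: recover $f^\ast\sigma$ from the Euler class, identify $\kappa(f)$ with $\kappa(E)$ by pulling back a single-zero transverse section of $E_0$, and conclude via Theorem~\ref{T:cohomotopy computation}. In fact you supply more detail than the paper does at the crucial point---the paper simply asserts that ``the framed divisor of $E_i$ coincides with the degree of $f_i$'' with a reference to the definitions, whereas you explicitly verify that the spin-induced framing on $E|_C$ is the constant one (because $f|_C$ is constant and hence the pulled-back spin structure is standard) and that $ds|_p = ds_0|_{x_0}\circ df|_p$ matches the two framings.
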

\begin{proof}
  We consider $f_1,f_2 \in \pi^n(M)$
  such that $E_1:=f_1^\ast(E_0)\cong f_2^\ast(E_0)=:E_2$ since they represent the Euler class the respective bundles.
  This implies $f_1^\ast(\sigma)=f_2^\ast(\sigma)$ for a generator in $H^n(S^n;\ZZ)$. Thus it remains
  to show that $\kappa(f_1) = \kappa(f_2)$. Let $x_i \in S^n$ be a regular value for $f_i$ for
  $i=1,2$. There is a section $\sigma_{0,i} \colon S^n \to E_0$ which
  is transverse to the zero section with an isolated zero in $x_i$
  (note that the Poincar\'e dual of $x_i$ in $S^n$ represents the Euler class of $E_0$. Therefore
  $\sigma_{0,i}$ can only exist since if the Euler class is a generator, since the index of
  transverse sections is always $\pm 1$). Then
  $\sigma_i:=f^\ast(\sigma_{0,i})$ is a transverse section of $E_i$. Note that from the
  Pontryagin-Thom construction we may assume that $f_i^{-1}(x_i)$ is connected, hence
  the zero locus of $\sigma_i$ coincides with $f_i^{-1}(x_i)$. Moreover the framed
  divisor of $E_i$ coincides with the degree of $f_i$ (cf. Definitions \ref{D:invariant} and
  \ref{D:framed divisor}). Since $E_1\cong E_2$ we have $\kappa(E_1) \cong \kappa(E_2)$ by
  construction of the framed divisor and Proposition \ref{P:framed divisor does not depend on the
  section}. From $f_1^{\ast}(\sigma) = f_2^{\ast}(\sigma)$ and
  $\kappa(f_1)=\kappa(E_1)=\kappa(E_2)=\kappa(f_2)$ it follows from Theorem \ref{T:cohomotopy
  computation} that $f_1$ is homotopic to $f_2$.
\end{proof}

\bibliographystyle{acm}
\bibliography{counting-formula}
\end{document}